\newtheorem{thm}{Theorem}[section]
 \newtheorem{lem}{Lemma}[section]
 \newtheorem{prop}{Proposition}[section]
 \newtheorem{defn}{Definition}[section]%是否是全文计数？
\theoremstyle{remark}
\newtheorem{rem}{Remark}[section]
\title{Local well-posedness for Euler-Poisson fluids with non-zero heat conduction}
\author{Jiang Xu\thanks{E-mail: jiangxu\underline{ }79@yahoo.com.cn,\ jiangxu\underline{ }79@nuaa.edu.cn}\\
\small{\textit{Department of Mathematics}},
\\ \small{\textit{Nanjing
University of Aeronautics and Astronautics}}, \\
\small{\textit{Nanjing 211106, P.R.China}}}
\date{}
\begin{document}
\maketitle{}
\begin{abstract}
We consider the multidimensional Euler-Poisson equations with
non-zero heat conduction, which consist of a coupled
hyperbolic-parabolic-elliptic system of balance laws. We make a deep
analysis on the coupling effects and establish a local
well-posedness of classical solutions to the Cauchy problem
pertaining to data in the critical Besov space. Proof mainly relies
on a standard iteration argument. To achieve it, a new Moser-type
inequality is developed by the Bony' decomposition.
\end{abstract}

\hspace{-0.5cm}\textbf{Keywords:} \small{local well-posedness, Euler-Poisson equations, classical solutions, Besov spaces}\\

\hspace{-0.5cm}\textbf{AMS subject classification:} \small{35M10;
35Q35; 76X05}
\section{Introduction and main results}
The ongoing miniaturization of semiconductor devices, some high
field phenomena such as hot electron effects, impact ionization and
heat generation appear inside the devices. The traditional
drift-diffusion model employed for numerical simulation does not
provide an adequate description of these effects. Consequently, the
hydrodynamical model for semiconductors was introduced, which can be
derived from the Boltzmann equation by moment method based on the
shifted Maxwellian ansatz for the equilibrium phase space
distribution. Precisely, the hydrodynamical model takes the form of
the following compressible Euler-Poisson equations (see, e.g.,
\cite{MRS}):
\begin{equation}
\left\{
\begin{array}{l}\partial_{t}n+\mathrm{div}(n\textbf{u})=0,\\[1mm]
\partial_{t}(n\textbf{u})+\mathrm{div}(n\textbf{u}\otimes\textbf{u})+\nabla
P
 =n\nabla\mathit\Phi-\frac{n\textbf{u}}{\tau_{p}},
 \\[1mm]
\partial_{t}W+\mathrm{div}(W\textbf{u}+P\textbf{u})-\mathrm{div}(\kappa\nabla \mathcal{T})
=n\textbf{u}\cdot\nabla\mathit\Phi
-\frac{W-\overline{W}}{\tau_{w}},\\[1mm]
\lambda^2\Delta\mathit\Phi=n-\bar{n},
 \end{array} \right.\label{R-E1}
\end{equation}
for $(t,x)\in [0,+\infty)\times \mathbb{R}^{N}(N=2,3)$. Here,
$n(t,x)>0$ denotes the electron density, $\textbf{u}(t,x)\in
\mathbb{R}^{N}$ electron velocity and $W(t,x)$ energy density.
$\mathit\Phi=\mathit\Phi(t,x)$ represents the electrostatic
potential generated by the Coulomb force from electrons and
background ions. $P=n\mathcal{T}$ is the pressure of electron fluid
where $\mathcal{T}(t,x)$ is the temperature of electrons. The energy
density $W$ satisfies
$W=\frac{n|\textbf{u}|^2}{2}+\frac{P}{\gamma-1}(\gamma>1)$ and
$\overline{W}=\frac{n\mathcal{T}_{L}}{\gamma-1}$ is the ambient
device energy, where $\mathcal{T}_{L}>0$ is a given ambient device
temperature. $\bar{n}>0$ is the doping profile which stands for the
density of fixed, positively charged background ions. The scaled
coefficient $\tau_{p},\tau_{w}$ and $\lambda$ are the momentum
relaxation-time, energy relaxation-time and the Debye length,
respectively. The coefficient $\kappa$ is the heat conductivity,
which generally depends on the electron density and temperature. For
the sake of simplicity, we assume it to be one constant.

The full hydrodynamical model (\ref{R-E1}) for the balance laws of
the density, velocity, temperature and the electric potential
consists of a quasi-linear hyperbolic-parabolic-elliptic system,
which contains the damping relaxation, heat conduction and electric
dissipation. The interaction of these special effects makes it
complicated to understand the qualitative behavior of solutions,
however, many efforts were made by various authors, see
\cite{A,ABN,ACJP,CJZ,DM,G,GS,GN,HJZ,HMW,HW,L,LNX,MN,W,WC,X1,XY2,Y}
and the references therein, for issues of well-posedness of
steady-state solutions or classical, large time behavior and
singular limit problems.

In this paper, we are concerned with the well-posedness of classical
solutions starting with smooth initial data under the coupled
effects. In the one space dimension, Chen, Jerome, and Zhang
\cite{CJZ} first considered the initial boundary problem of
(\ref{R-E1}) and established the local existence of smooth
solutions. Furthermore, they showed that smoothness in local
solutions can be extended globally in time for smooth initial data
near a constant state. This result indicated that the relaxation
effect could prevent the development of shock waves for the case of
smooth initial data with small oscillation. Hsiao and Wang \cite{HW}
considered the corresponding non-constant steady state solutions to
(\ref{R-E1}) and it was shown that the solutions were exponentially
locally asymptotically stable. For the Cauchy problem of
(\ref{R-E1}) with large smooth initial data, it was proved in
\cite{WC} the solution generally develops a singularity, shock
waves, and hence no global classical solution exists, which is due
to the strong hyperbolicity, even the damping relaxation and the
heat conduction (parabolicity) can't prevent the formation of
singularity. The effect of the Poisson coupling (ellipticity) is
smoothing and it decisively affects the stationary states of the
Euler-Poisson equations (\ref{R-E1}), see \cite{DM,Ga}.

Physically, it is more important and more interesting to study
(\ref{R-E1}) in several space dimensions where were expected to get
some similar results as the one-dimension case. Hsiao, Jiang and
Zhang \cite{HJZ} first studied the Cauchy-Neumann problem of
(\ref{R-E1}). Using the classical energy approach, they established
the global exponential stability of small smooth solutions near the
constant equilibrium. Subsequently, Li \cite{L} extended their
results to the non-constant equilibrium.

Recently, we started a program to investigate the hydrodynamic model
for semiconductor from the point of view of Fourier analysis, which
is more careful and refined manner. For instance, the method enables
us to understand the Poisson coupling effect well, which plays a key
role in the low frequency of density. Such a fact explains the
global exponential stability of small smooth solutions in essential.
Up to now, we have achieved some results in this direction, see
\cite{FXZ,X2,X3,XY1,XZ}. In these works, we focused on the case of
$\kappa\equiv0$ mainly and the Euler-Poisson equations (\ref{R-E1})
can be reduced to the pure hyperbolic form of the balance law with a
non-local source term by virtue of the Green's formulation. However,
the case of $\kappa\neq0$ is not the trivial one based on the
following considerations:
\begin{itemize}
\item[(1)]
The Euler-Poisson equations (\ref{R-E1}) has not scaling invariance,
pertinent to the compressible Navier-Stokes equations in
\cite{D1,D2}. Thus, we are going to choose the
\textit{non-homogeneous} Besov spaces $B^{N/p}_{p,1}(1\leq
p<\infty)$ as the basic functional setting (in $x$), which are the
critical spaces embedding in the space of Lipschitz functions;
\item[(2)]For the critical case of
regularity index, the classical existence theory for generally
hyperbolic systems established by Kato and Majda \cite{K,M} fails.
Iftimie \cite{I} first gave the contribution for general hyperbolic
systems, however, due to the complicated coupling, the result of
Iftimie can not be applied directly;
\item[(3)]The inequality in Proposition \ref{prop2.6} giving
parabolic regularity in the framework of Besov spaces depends on
given time $T$ (except for the case of $\alpha=1$ and
$\alpha_{1}=\infty$ ), which may preclude from proving global
existence results (even if small data).
\end{itemize}
Therefore, as the first step, we establish a local existence result
of Cauchy problem pertaining to data in the critical Besov spaces
for the Euler-Poisson equations (\ref{R-E1}) with $\kappa\neq0$. For
this purpose, the initial conditions for $n, \textbf{u}$ and
$\mathcal{T}$, and a boundary condition for $\mathit\Phi$ are
equipped:
\begin{eqnarray}
(n,\textbf{u},\mathcal{T})(x,0)=(n_{0}, \textbf{u}_{0},
\mathcal{T}_0)(x),\ \  x\in \mathbb{R}^{N}, \label{R-E2}
\end{eqnarray}
\begin{eqnarray}
\lim_{|x|\rightarrow +\infty}\mathit\Phi(t,x)=0, \ \ \ a.\ e. \ \
t>0, \label{R-E3}
\end{eqnarray}
where the homogeneous boundary condition for $\mathit\Phi$ means
that the semiconductor device is in equilibrium at infinity.

The local existence of a solution stems from the standard iterative
method. In comparison with that in \cite{D2}, there are some
differences in the proof of local existence. First, the estimate of
density $n$ does not follow from the estimates for the transport
equation in Besov spaces directly, since the velocity $\textbf{u}$
in the momentum equations has not higher regularity. Actually, to
obtain the desired frequency-localization estimate of $(n,
\textbf{u})$, we introduce a function change to reduce (\ref{R-E1})
to a part symmetric hyperbolic form, and take full advantage of
linear hyperbolic theory in the framework of Besov spaces and
hyperbolic energy method for dyadic blocks. Consequently, this
restricts us to the space case of $p=2$ in (1). Second, we show that
the approximate solution sequence is a Cauchy sequence in some norm
to prove the convergence rather than using compactness arguments. In
the meantime, in order to overcome the difficulty arising from the
heat conduction term, we develop a more general version of the
classical Moser-type inequality in Proposition \ref{prop2.3}, the
reader is refer to the Appendix. According to the new Moser-type
inequality, the heat conduction term can be estimated ultimately,
for details, see (\ref{R-E35})-(\ref{R-E36}). Finally, by a careful
analysis on the coupling effects, the uniqueness of classical
solutions is shown in the appropriately \textit{larger} spaces.

Through this paper, the regularity index $\sigma=1+N/2$. Our main
result is stated as follows.

\begin{thm} \label{thm1.1} Let \ $\bar{n},\mathcal{T}_{L}>0$ be the constant reference density and temperature.
Suppose that\ $n_{0}-\bar{n}, \mathbf{u}_{0},
\nabla\mathit\Phi(\cdot,0)\in B^{\sigma}_{2,1}(\mathbb{R}^{N})$ with
$n_{0}>0$ and $\mathcal{T}_{0}-\mathcal{T}_{L}\in
B^{\sigma+1}_{2,1}(\mathbb{R}^{N})$. Then there exists a time
$T_{1}>0$ such that
\begin{itemize}
\item[(i)] Existence:  the
system (\ref{R-E1})-(\ref{R-E3}) has a solution
$(n,\mathbf{u},\mathcal{T},\nabla\mathit\Phi)$ belongs to
$$(n,\mathbf{u},\mathcal{T}, \nabla\mathit\Phi)\in \mathcal{C}^{1}([0,T_{1}]\times \mathbb{R}^{N})\ \ \ \mbox{with}\
\ \  n>0\ \  \mbox{for all} \ \ t\in [0,T_{1}].$$ Furthermore, the
solution $(n,\mathbf{u},\mathcal{T},\nabla\mathit\Phi)$ satisfies
$$(n-\bar{n}, \mathbf{u}, \nabla\mathit\Phi)\in  \widetilde{\mathcal{C}}_{T_{1}}(B^{\sigma}_{2,1}(\mathbb{R}^{N}))
\times\Big(\widetilde{\mathcal{C}}_{T_{1}}(B^{\sigma}_{2,1}(\mathbb{R}^{N}))\Big)^{N}\times\Big(\widetilde{\mathcal{C}}_{T_{1}}(B^{\sigma}_{2,1}(\mathbb{R}^{N}))\Big)^{N}$$
and
$$\mathcal{T}-\mathcal{T}_{L}\in \widetilde{\mathcal{C}}_{T_{1}}(B^{\sigma+1}_{2,1}(\mathbb{R}^{N})).$$

\item[(ii)] Uniqueness: the solution
$(n,\mathbf{u},\mathcal{T},\nabla\mathit\Phi)$ is unique in the
spaces
$$\widetilde{L}^\infty_{T_{1}}(B^{\sigma-1}_{2,1})\times\Big(\widetilde{L}^\infty_{T_{1}}(B^{\sigma-1}_{2,1})\Big)^{N}
\times\widetilde{L}^1_{T_{1}}(B^{\sigma}_{2,1})\times\Big(\widetilde{L}^\infty_{T_{1}}(B^{\sigma-1}_{2,1})\Big)^{N},$$
\end{itemize}
 In
addition, there exists a constant $C_{0}>0$ depending only on
$\bar{n}, \mathcal{T}_{L}, N,\gamma$ such that
\begin{eqnarray}\|(n-\bar{n},\mathbf{u},\nabla\mathit\Phi)\|_{\widetilde{L}^\infty_{T_{1}}({B^{\sigma}_{2,1}})}
+\|\mathcal{T}-\mathcal{T}_{L}\|_{\widetilde{L}^\infty_{T_{1}}(B^{\sigma+1}_{2,1})}\leq
C_{0}M,\label{R-E1001}\end{eqnarray} where $M:=\|(n_{0}-\bar{n},
\mathbf{u}_{0},
\nabla\mathit\Phi(\cdot,0))\|_{B^{\sigma}_{2,1}}+\|\mathcal{T}_{0}-\mathcal{T}_{L}\|_{B^{\sigma+1}_{2,1}}$
and $\nabla\mathit\Phi(\cdot,0):=\nabla\Delta^{-1}(n_{0}-\bar{n})$.

\end{thm}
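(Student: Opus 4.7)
The plan is to follow a Danchin-style iteration scheme (as in \cite{D2}) while carefully accounting for the three distinct couplings (hyperbolic, parabolic, elliptic). First I would center around the equilibrium by setting $\tilde n=n-\bar n$, $\tilde{\mathcal{T}}=\mathcal{T}-\mathcal{T}_L$, and use the Poisson equation to rewrite the electrostatic force as $\nabla\mathit\Phi=\lambda^{-2}\nabla\Delta^{-1}\tilde n$, a zeroth-order non-local operator bounded on $B^{\sigma}_{2,1}$. Combining the energy equation with the continuity and momentum equations yields an evolution equation for $\mathcal{T}$ with parabolic principal part $(\gamma-1)\kappa n^{-1}\Delta\mathcal{T}$ and lower-order nonlinearities. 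Given the $k$-th iterate, define $(n^{k+1},\mathbf{u}^{k+1})$ as the solution of the linear symmetric-hyperbolic system obtained by Friedrichs-type symmetrization with the multiplier built from $\mathcal{T}^{k}/n^{k}$ (necessary because $\mathbf{u}^k$ has no extra regularity and a naive transport estimate for $n$ would lose derivatives), $\mathcal{T}^{k+1}$ as the solution of the associated linear parabolic equation with variable heat coefficient $(\gamma-1)\kappa/n^{k}$, and $\nabla\mathit\Phi^{k+1}=\lambda^{-2}\nabla\Delta^{-1}(n^{k+1}-\bar n)$. The zeroth iterate is taken to be the initial data frozen in time.

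The second step is uniform bounds on a time interval $[0,T_{1}]$ independent of $k$. Dyadic energy estimates for the linear symmetric-hyperbolic system control $(n^{k+1}-\bar n,\mathbf{u}^{k+1})$ in $\widetilde{\mathcal{C}}_{T_1}(B^{\sigma}_{2,1})$; Proposition \ref{prop2.6} applied to the parabolic equation for $\mathcal{T}^{k+1}$ produces the expected bound in $\widetilde{\mathcal{C}}_{T_1}(B^{\sigma+1}_{2,1})\cap\widetilde{L}^{1}_{T_1}(B^{\sigma+2}_{2,1})$ with a constant that, as flagged in the introduction, depends on $T_{1}$; elliptic regularity dispatches $\nabla\mathit\Phi^{k+1}$. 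Closing the induction requires nonlinear control in $B^{\sigma}_{2,1}$ through Bony's decomposition for products, composition estimates of the form $\|\phi(n^{k})-\phi(\bar n)\|_{B^{\sigma}_{2,1}}$, and, crucially, the new Moser-type inequality of Proposition \ref{prop2.3} to handle the heat-conduction contribution in the forcing. Uniform positivity of $n^{k}$ on $[0,T_{1}]$ follows from the Lipschitz embedding $B^{\sigma}_{2,1}\hookrightarrow L^{\infty}$ and a continuity argument, so the coefficient $(\gamma-1)\kappa/n^{k}$ is safely bounded away from zero.

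The third step is convergence. I would show that the differences $\delta^{k}:=(n^{k+1}-n^{k},\mathbf{u}^{k+1}-\mathbf{u}^{k},\mathcal{T}^{k+1}-\mathcal{T}^{k},\nabla\mathit\Phi^{k+1}-\nabla\mathit\Phi^{k})$ form a Cauchy sequence in the weaker norm
\begin{equation*}
\widetilde{L}^{\infty}_{T_{1}}(B^{\sigma-1}_{2,1})\times\bigl(\widetilde{L}^{\infty}_{T_{1}}(B^{\sigma-1}_{2,1})\bigr)^{N}\times\widetilde{L}^{1}_{T_{1}}(B^{\sigma}_{2,1})\times\bigl(\widetilde{L}^{\infty}_{T_{1}}(B^{\sigma-1}_{2,1})\bigr)^{N},
\end{equation*}
by applying the same linear hyperbolic and parabolic estimates one derivative below the target regularity to the equations satisfied by $\delta^{k}$, whose right-hand sides are products of differences against uniformly bounded iterates. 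Passing to the limit yields a solution at the lower regularity level; Fatou-type arguments combined with the uniform bounds from step two promote it to the declared class, and continuity in time at the top level follows from the $\widetilde{\mathcal{C}}$-framework. Uniqueness in the space announced in (ii) is proved by subtracting two solutions and running exactly the same difference estimates, and the bound (\ref{R-E1001}) is the passage to the limit of the uniform bounds of step two.

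The main obstacle I expect is the step-two estimate for $\mathcal{T}^{k+1}$: Proposition \ref{prop2.6} is tailored to the constant-coefficient heat semigroup, so the variable coefficient $(\gamma-1)\kappa/n^{k}$ must be handled by decomposing $(n^{k})^{-1}\Delta\mathcal{T}^{k+1}=\bar n^{-1}\Delta\mathcal{T}^{k+1}+\bigl((n^{k})^{-1}-\bar n^{-1}\bigr)\Delta\mathcal{T}^{k+1}$ and absorbing the remainder into the forcing. Controlling that remainder in $\widetilde{L}^{1}_{T_{1}}(B^{\sigma}_{2,1})$ \emph{without} bleeding two derivatives into $\mathcal{T}^{k+1}$ is precisely what the generalized Moser inequality in Proposition \ref{prop2.3} is designed for, and it is the technical heart of the argument. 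A secondary and related difficulty is the $T$-dependence of the parabolic constant: it forces $T_{1}$ to be taken genuinely small rather than merely depending on the size of the data, and it is why the theorem is only local.
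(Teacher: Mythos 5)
Your overall architecture (iterate, prove uniform bounds, show the differences are Cauchy one derivative below, recover the top regularity by Fatou, reuse the difference estimates for uniqueness) is the same as the paper's, but two of your concrete choices would fail as stated. First, the electric field: you assert that $\nabla\Delta^{-1}$ is a zeroth-order operator bounded on $B^{\sigma}_{2,1}$ and propose to set $\nabla\mathit\Phi^{k+1}=\nabla\Delta^{-1}(n^{k+1}-\bar n)$ at each step. In fact $\nabla\Delta^{-1}$ has symbol $\xi/|\xi|^{2}$ and is of order $-1$; it is not bounded on $L^{2}$-based Besov spaces because of the low frequencies, which is exactly why $\nabla\mathit\Phi(\cdot,0)\in B^{\sigma}_{2,1}$ is a separate \emph{hypothesis} of the theorem rather than a consequence of $n_{0}-\bar n\in B^{\sigma}_{2,1}$. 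The paper sidesteps this by differentiating the Poisson equation in time and evolving $\mathbf{E}=\nabla\mathit\Phi$ through the ODE in (\ref{R-E5}), whose right-hand side involves only the genuinely zeroth-order combination $\nabla\Delta^{-1}\mathrm{div}$ (Riesz transforms); see the last equation of (\ref{R-E7}) and the estimate (\ref{R-E16}). Second, the parabolic step: you keep the current iterate in the remainder $\bigl((n^{k})^{-1}-\bar n^{-1}\bigr)\Delta\mathcal{T}^{k+1}$ and plan to absorb it into the left-hand side of the heat estimate. By any Moser-type inequality this remainder is bounded by $\|(n^{k})^{-1}-\bar n^{-1}\|_{L^{\infty}\cap B^{\sigma}_{2,1}}\,\|\mathcal{T}^{k+1}\|_{\widetilde L^{1}_{T_{1}}(B^{\sigma+2}_{2,1})}$ plus lower-order terms, so absorption forces the coefficient, hence $n_{0}-\bar n$, to be small --- precisely the smallness condition the theorem (and Remark \ref{rem1.2}) dispenses with, and one that shrinking $T_{1}$ cannot restore. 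The paper instead lags this term to the \emph{previous} iterate, $h_{1}(\rho^{m})\Delta\theta^{m}$ in (\ref{R-E7}), so the linear problem is a clean constant-coefficient heat equation; the resulting forcing is then tamed by taking $\tilde\kappa$ large and $T_{1}$ small, cf. (\ref{R-E17})--(\ref{R-E22}).

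Two smaller remarks. The new Moser inequality is Proposition \ref{prop4.1} (the Bony-decomposition generalization with mixed Lebesgue indices), not Proposition \ref{prop2.3}; and its real role is not in the uniform-bound step (where $B^{\sigma-1}_{2,1}=B^{N/2}_{2,1}$ is an algebra and the classical estimate suffices) but in the convergence step, where $h_{1}(\rho^{m})\Delta\delta\theta^{m}$ must be estimated in $B^{\sigma-2}_{2,1}$, a space that does not embed in $L^{\infty}$, so the $L^{2}\times B^{\sigma-2}_{\infty,1}$ pairing of (\ref{R-E35})--(\ref{R-E36}) is indispensable. Finally, your Friedrichs symmetrizer $\mathcal{T}^{k}/n^{k}$ is a legitimate alternative to the paper's device, but the paper's logarithmic change of variable $\rho=\ln n-\ln\bar n$ yields a \emph{constant} symmetrizer $\mathcal{T}_{L}$ for the $(\rho,\mathbf{u})$ block, which is what makes the dyadic energy identity (\ref{R-E10}) and the commutator bookkeeping manageable at the critical regularity; with a variable symmetrizer you would need additional commutator estimates between $\Delta_{q}$ and $\mathcal{T}^{k}/n^{k}$ that your sketch does not supply.
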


\begin{rem}\label{rem1.1}
 The symbol $\nabla\Delta^{-1}$ means
    $$\nabla\Delta^{-1}f=\int_{\mathbb{R}^{N}}\nabla_{x}G(x-y)f(y)dy,$$
 where $G(x,y)$ is a solution to $\Delta_xG(x,y)=\delta(x-y)$ with
 $x,y\in\mathbb{R}^N$.
\end{rem}

\begin{rem}\label{rem1.2}
Unlike \cite{D2},  \textit{no smallness condition} on the initial
density is required, hence the local well-posedness in critical
spaces holds for any initial density bounded away from zero. Let us
mention that the heat conductivity $\kappa$ in the proof is assumed
to be large appropriately, see the following inequalities
(\ref{R-E22}) and (\ref{R-E37}).
\end{rem}

\begin{rem}\label{rem1.3}
The local existence results in the framework of Sobolev spaces
$H^{s}$ with higher regularity ($s>1+N/2$) were obtained by the
standard contraction mapping principle, see, e.g., \cite{CJZ}.
Theorem \ref{thm1.1} deals with the limit case of regularity
($\sigma=1+N/2$), which is a natural generalization of their
results. To the best of our knowledge, the global well-posedness and
large-time behavior of (\ref{R-E1}) in critical spaces still remain
unsolved, since the inequality of parabolic regularity in
Proposition \ref{prop2.6} depends on the time $T$, which are under
current consideration.
\end{rem}

The rest of this paper unfolds as follows. In Section 2, we briefly
review the Littlewood-Paley decomposition theory and the
characterization of Besov spaces and Chemin-Lerner's spaces. Section
3 is dedicated to the proof of the local well-posedness of classical
solutions in critical spaces. Finally, the paper ends with an
appendix, where we prove a Moser-type inequality with aid of the
Bony's composition.

\textbf{Notations}. Throughout this paper, $C>0$ is a harmless
constant. Denote by $\mathcal{C}([0,T],X)$ (resp.,
$\mathcal{C}^{1}([0,T],X)$) the space of continuous (resp.,
continuously differentiable) functions on $[0,T]$ with values in a
Banach space $X$. For simplicity, the notation $\|(a,b,c,d)\|_{X}$
means $\|a\|_{X}+\|b\|_{X}+\|c\|_{X}+\|d\|_{X}$, where $a,b,c,d\in
X$. We shall omit the space dependence, since all functional spaces
are considered in $\mathbb{R}^{N}$. Moreover, the integral
$\int_{\mathbb{R}^{N}}fdx$ is labeled as $\int f$ without any
ambiguity.

\section{Tools}\setcounter{equation}{0}
The proofs of most of the results presented in this paper require a
dyadic decomposition of Fourier variable. Let us recall briefly the
Littlewood-Paley decomposition theory and the characterization of
Besov spaces and Chemin-Lerner's spaces, see for instance
\cite{BCD,D} for more details.

Let $(\varphi, \chi)$ be a couple of smooth functions valued in $[0,
1]$ such that $\varphi$ is supported in the shell
$\textbf{C}(0,\frac{3}{4},\frac{8}{3})
=\{\xi\in\mathbb{R}^{N}|\frac{3}{4}\leq|\xi|\leq\frac{8}{3}\}$,
$\chi$ is supported in the ball $\textbf{B}(0,\frac{4}{3})=
\{\xi\in\mathbb{R}^{N}||\xi|\leq\frac{4}{3}\}$ and
$$
\chi(\xi)+\sum_{q=0}^{\infty}\varphi(2^{-q}\xi)=1,\ \ \ \
\forall\xi\in\mathbb{R}^{N}.
$$
Let $\mathcal{S'}$ be the dual space of the Schwartz class
$\mathcal{S}$. For $f\in\mathcal{S'}$, the nonhomogeneous dyadic
blocks are defined as follows:
$$
\Delta_{-1}f:=\chi(D)f=\tilde{\omega}\ast f\ \ \ \mbox{with}\ \
\tilde{\omega}=\mathcal{F}^{-1}\chi;
$$
$$
\Delta_{q}f:=\varphi(2^{-q}D)f=2^{qd}\int \omega(2^{q}y)f(x-y)dy\ \
\ \mbox{with}\ \ \omega=\mathcal{F}^{-1}\varphi,\ \ \mbox{if}\ \
q\geq0,
$$
where $\ast$ the convolution operator and $\mathcal{F}^{-1}$ the
inverse Fourier transform. The nonhomogeneous Littlewood-Paley
decomposition is
$$
f=\sum_{q \geq-1}\Delta_{q}f \qquad \forall f\in \mathcal{S'}.
$$
Define the low frequency cut-off by
$$S_{q}f:=\sum_{p\leq q-1}\Delta_{p}f.$$ Of course, $S_{0}f=\Delta_{-1}f$. The above
Littlewood-Paley decomposition is almost orthogonal in $L^2$.
\begin{prop}\label{prop2.1}
For any $f\in\mathcal{S'}(\mathbb{R}^{N})$ and
$g\in\mathcal{S'}(\mathbb{R}^{d})$, the following properties hold:
$$\Delta_{p}\Delta_{q}f\equiv 0 \ \ \ \mbox{if}\ \ \ |p-q|\geq 2,$$
$$\Delta_{q}(S_{p-1}f\Delta_{p}g)\equiv 0\ \ \ \mbox{if}\ \ \ |p-q|\geq 5.$$
\end{prop}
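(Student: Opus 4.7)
The plan is to derive both identities from Fourier-support considerations: a tempered distribution whose Fourier transform vanishes identically is itself zero, so it suffices to show that the relevant spectra are disjoint. First I would record the building-block spectra dictated by the definitions of $\chi$ and $\varphi$: for $q\geq 0$ the Fourier transform of $\Delta_q f$ is supported in the dyadic annulus $\mathbf{C}(0,(3/4)2^q,(8/3)2^q)$, while $\Delta_{-1} f$ is spectrally supported in the ball $\mathbf{B}(0,4/3)$.

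For the first identity, I would argue by symmetry that it suffices to treat $q\geq p+2$. When $p\geq 0$, the inner radius of $\mathrm{supp}\,\widehat{\Delta_q f}$ satisfies $(3/4)2^q \geq 3\cdot 2^p$, which strictly exceeds the outer radius $(8/3)2^p$ of $\mathrm{supp}\,\widehat{\Delta_p f}$, so the two spectra are disjoint. When $p=-1$ and $q\geq 1$, the outer radius $4/3$ of $\mathbf{B}(0,4/3)$ is strictly smaller than the inner radius $(3/4)2^q \geq 3/2$ of the annulus. Since $\widehat{\Delta_p\Delta_q f}$ is the pointwise product $\varphi(2^{-p}\cdot)\varphi(2^{-q}\cdot)\widehat{f}$ (or the analogous expression with $\chi$ in place of one of the $\varphi$'s when $p=-1$), the disjointness of supports forces $\Delta_p\Delta_q f\equiv 0$.

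For the second identity, I would use that the Fourier transform of a product is the convolution of the Fourier transforms, so the spectrum of $S_{p-1}f\,\Delta_p g$ is contained in the Minkowski sum of the individual spectra. Since $S_{p-1}f=\sum_{j\leq p-2}\Delta_j f$ is spectrally localized in $\mathbf{B}(0,(8/3)2^{p-2})=\mathbf{B}(0,(2/3)2^p)$, while $\Delta_p g$ lies in $\mathbf{C}(0,(3/4)2^p,(8/3)2^p)$, the triangle inequality confines the spectrum of the product to the annulus $\{(1/12)2^p \leq |\xi| \leq (10/3)2^p\}$. Applying $\Delta_q$ then requires this set to meet $\mathbf{C}(0,(3/4)2^q,(8/3)2^q)$; a direct comparison shows that if $q\geq p+5$ then $(3/4)2^q \geq 24\cdot 2^p > (10/3)2^p$, whereas if $q\leq p-5$ then $(8/3)2^q \leq (1/12)2^p$, so in both cases the supports are disjoint and $\Delta_q(S_{p-1}f\,\Delta_p g)\equiv 0$.

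The proof is essentially bookkeeping on dyadic scales, and the only real subtlety is isolating the low-frequency block $\Delta_{-1}$ from the annular blocks, together with tracking how the explicit constants $3/4$ and $8/3$ from the definition of $\varphi$ produce exactly the gaps $2$ and $5$ required for strict disjointness of the spectra.
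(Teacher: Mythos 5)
The paper itself offers no proof of Proposition \ref{prop2.1} (it is quoted as a standard fact from the references \cite{BCD,D}), and your argument is exactly the standard one: reduce everything to Fourier-support bookkeeping. The first identity and the case $q\geq p+5$ of the second are handled correctly, with strict separation of the spectra.

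There is, however, one genuine (if small) flaw in the remaining case: for $q\leq p-5$ you claim the supports are disjoint because $\frac{8}{3}2^{q}\leq\frac{1}{12}2^{p}$, but at the endpoint $q=p-5$ this is an equality, $\frac{8}{3}2^{p-5}=\frac{1}{12}2^{p}$, so the closed support of $\varphi(2^{-q}\cdot)$ and the closed annulus $\{\frac{1}{12}2^{p}\leq|\xi|\leq\frac{10}{3}2^{p}\}$ containing the spectrum of $S_{p-1}f\,\Delta_{p}g$ are \emph{not} disjoint: they meet along the sphere $|\xi|=\frac{1}{12}2^{p}$ (and this inner bound is attained, since $|\eta|=\frac{2}{3}2^{p}$, $|\zeta|=\frac{3}{4}2^{p}$ with $\eta,\zeta$ antiparallel lie in the closed supports of $\widehat{S_{p-1}f}$ and $\widehat{\Delta_{p}g}$). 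So the "disjoint supports" step, as written, does not cover $|p-q|=5$; strict disjointness only gives the conclusion for $p-q\geq 6$. The statement is still true at the endpoint, but it needs an extra observation: if the relevant Fourier transforms are locally integrable functions (as in all applications in this paper), the intersection is a set of measure zero and the product vanishes almost everywhere; for general tempered distributions one should note that $\varphi(2^{-q}\cdot)$, being smooth and identically zero outside its closed annulus, vanishes together with all its derivatives on the touching sphere, and since a tempered distribution has (locally) finite order, testing $\mathcal{F}\big(S_{p-1}f\,\Delta_{p}g\big)$ against $\varphi(2^{-q}\cdot)\phi$ gives zero (Hörmander's theorem on functions whose derivatives vanish on the support). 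Adding this remark, or simply treating the case $p-q=5$ separately, closes the gap; everything else in your write-up is correct and coincides with the argument in the cited references.
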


Having defined the linear operators $\Delta_q (q\geq -1)$, we give
the definition of Besov spaces and Bony's decomposition.
\begin{defn}\label{defn2.1}
Let $1\leq p\leq\infty$ and $s\in \mathbb{R}$. For $1\leq r<\infty$,
Besov spaces $B^{s}_{p,r}\subset \mathcal{S'}$ are defined by
$$
f\in B^{s}_{p,r} \Leftrightarrow \|f\|_{B^s_{p, r}}=:
\Big(\sum_{q\geq-1}(2^{qs}\|\Delta_{q}f\|_{L^{p}})^{r}\Big)^{\frac{1}{r}}<\infty
$$
and $B^{s}_{p,\infty}\subset \mathcal{S'}$ are defined by
$$
f\in B^{s}_{p,\infty} \Leftrightarrow \|f\|_{B^s_{p, \infty}}=:
\sup_{q\geq-1}2^{qs}\|\Delta_{q}f\|_{L^{p}}<\infty.
$$
\end{defn}

\begin{defn}\label{defn2.2}
Let $f,g $ be two temperate distributions. The product $f\cdot g$
has the Bony's decomposition:
$$f\cdot g=T_{f}g+T_{g}f+R(f,g), $$
where $T_{f}g$ is paraproduct of $g$ by $f$,
$$ T_{f}g=\sum_{p\leq q-2}\Delta_{p}f\Delta_{q}g=\sum_{q}S_{q-1}f\Delta_{q}v$$
and the remainder $ R(f,g)$ is denoted by
$$R(f,g)=\sum_{q}\Delta_{q}f\tilde{\Delta}_{q}g\ \ \ \mbox{with} \ \
\tilde{\Delta}_{q}:=\Delta_{q-1}+\Delta_{q}+\Delta_{q+1}.$$
\end{defn}

As regards the remainder of para-product, we have the following
results.

\begin{prop} \label{prop2.2}
Let $(s_{1},s_{2})\in \mathbb{R}^2$ and $1\leq
p,p_{1},p_{2},r,r_{1},r_{2}\leq\infty$. Assume that
$$\frac{1}{p}\leq\frac{1}{p_{1}}+\frac{1}{p_{2}}\leq 1,\ \ \frac{1}{r}\leq\frac{1}{r_{1}}+\frac{1}{r_{2}},\ \ \mbox{and}\ \ s_{1}+s_{2}>0.$$
Then the remainder $R$ maps $B^{s_{1}}_{p_{1},r_{1}}\times
B^{s_{2}}_{p_{2},r_{2}}$ in
$B^{s_{1}+s_{2}+d(\frac{1}{p}-\frac{1}{p_{1}}-\frac{1}{p_{2}})}_{p,r}$
and there exists a constant $C$ such that
$$\|R(f,g)\|_{B^{s_{1}+s_{2}+d(\frac{1}{p}-\frac{1}{p_{1}}-\frac{1}{p_{2}})}_{p,r}}\leq \frac{C^{|s_{1}+s_{2}|+1}}{s_{1}+s_{2}}\|f\|_{B^{s_{1}}_{p_{1},r_{1}}}\|g\|_{B^{s_{2}}_{p_{2},r_{2}}}.$$
\end{prop}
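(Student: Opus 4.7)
The plan is the classical dyadic proof: estimate $\|\Delta_j R(f,g)\|_{L^p}$ block by block, weight by $2^{j\sigma}$ with $\sigma:=s_1+s_2+d(1/p-1/p_1-1/p_2)$, and then take the $\ell^r$ norm in $j$. The starting observation is a crucial support property: for each $q$, the Fourier transform of $\Delta_q f\,\tilde\Delta_q g$ is supported in a \emph{ball} of radius $\lesssim 2^q$ (the Minkowski sum of two annuli of comparable radii lives in such a ball). Consequently $\Delta_j(\Delta_q f\,\tilde\Delta_q g)\equiv 0$ whenever $q<j-k_0$ for a fixed integer $k_0$, so only the tail $q\ge j-k_0$ contributes to $\Delta_j R(f,g)$. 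This asymmetry is exactly what distinguishes the remainder from a paraproduct and explains why the condition $s_1+s_2>0$ (rather than just $s_i>0$) suffices.

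Next I would combine H\"older and Bernstein. Setting $\frac{1}{p_0}:=\frac{1}{p_1}+\frac{1}{p_2}\le 1$, H\"older gives $\|\Delta_q f\,\tilde\Delta_q g\|_{L^{p_0}}\le\|\Delta_q f\|_{L^{p_1}}\|\tilde\Delta_q g\|_{L^{p_2}}$, and Bernstein (applied to the function $\Delta_q f\,\tilde\Delta_q g$, whose spectrum sits in a ball of radius $\sim 2^q$) upgrades $L^{p_0}$ to $L^p$ with the cost $2^{qd(1/p_0-1/p)}$. Since $\sigma=s_1+s_2-d(1/p_0-1/p)$, multiplying by $2^{j\sigma}$ and rearranging yields
\[
2^{j\sigma}\|\Delta_j R(f,g)\|_{L^p}\le C\sum_{q\ge j-k_0}2^{(j-q)(s_1+s_2)}\,\alpha_q\beta_q,
\]
with $\alpha_q:=2^{qs_1}\|\Delta_q f\|_{L^{p_1}}\in\ell^{r_1}$ and $\beta_q:=2^{qs_2}\|\tilde\Delta_q g\|_{L^{p_2}}\in\ell^{r_2}$. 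Because $s_1+s_2>0$, the kernel $n\mapsto 2^{n(s_1+s_2)}\mathbf 1_{n\le k_0}$ (indexed by $n=j-q$) lies in $\ell^1(\mathbb Z)$ with norm bounded by $C/(1-2^{-(s_1+s_2)})$.

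Finally I would take the $\ell^r_j$ norm of both sides. Viewing the right-hand side as a discrete convolution of this $\ell^1$ kernel with the sequence $\alpha_q\beta_q$, discrete Young gives control in $\ell^{r_0}$ where $\frac{1}{r_0}:=\frac{1}{r_1}+\frac{1}{r_2}$; the hypothesis $\frac{1}{r}\le\frac{1}{r_1}+\frac{1}{r_2}$ then provides the embedding $\ell^{r_0}\hookrightarrow\ell^r$, and H\"older on the remaining sequences delivers
\[
\|R(f,g)\|_{B^{\sigma}_{p,r}}\le \frac{C}{1-2^{-(s_1+s_2)}}\|f\|_{B^{s_1}_{p_1,r_1}}\|g\|_{B^{s_2}_{p_2,r_2}}.
\]
The only delicate step is turning the prefactor $1/(1-2^{-(s_1+s_2)})$ into the stated $\frac{C^{|s_1+s_2|+1}}{s_1+s_2}$: for $0<s_1+s_2\le 1$ one uses the elementary bound $1/(1-2^{-x})\le C/x$, while for $s_1+s_2>1$ the denominator is bounded below and the factor $C^{|s_1+s_2|+1}$ absorbs uniform losses (including the $O(1)$ constants from Bernstein and the finite number of boundary terms at $q=-1$). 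I expect the main obstacle to be bookkeeping rather than analysis: combining H\"older in $L^p$ (governed by $(p,p_1,p_2)$ and Bernstein) with H\"older/Young in $\ell^r$ (governed by $(r,r_1,r_2)$) while explicitly tracking the exponential-in-$|s_1+s_2|$ constant requires some care, especially in the degenerate cases $r_i=\infty$ or $p_0=1$.
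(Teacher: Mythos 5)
Your plan is the standard dyadic proof (the paper itself offers no proof of Proposition \ref{prop2.2}; it is quoted from \cite{BCD,D}, where the argument follows exactly this scheme), but there is a concrete flaw at the Bernstein step. You apply Bernstein to the product $\Delta_q f\,\tilde\Delta_q g$, whose spectrum lies in a ball of radius $\sim 2^q$, and therefore pay the cost $2^{qd(1/p_0-1/p)}$ with $1/p_0:=1/p_1+1/p_2$. Carrying that factor through the bookkeeping gives $2^{j\sigma}\|\Delta_j(\Delta_q f\,\tilde\Delta_q g)\|_{L^p}\le C\,2^{(j-q)\sigma}\alpha_q\beta_q$ with $\sigma=s_1+s_2-d(1/p_0-1/p)$, \emph{not} the factor $2^{(j-q)(s_1+s_2)}$ that you display. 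Since $\sigma\le s_1+s_2$ and $n=j-q\le k_0$ ranges over arbitrarily negative integers, $2^{n\sigma}\ge 2^{n(s_1+s_2)}$ there, so your displayed inequality is strictly stronger than what your reasoning yields; the subsequent Young-type summation would then need $\sigma>0$ rather than $s_1+s_2>0$. This breaks down precisely in the regime the proposition is designed for: when $1/p<1/p_1+1/p_2$ one can have $\sigma\le 0$ even though $s_1+s_2>0$, the kernel $n\mapsto 2^{n\sigma}\mathbf 1_{n\le k_0}$ is not in $\ell^1$, and even when $0<\sigma<s_1+s_2$ your route produces the constant $1/\sigma$ instead of the stated $C^{|s_1+s_2|+1}/(s_1+s_2)$.

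The repair is exactly the asymmetry you describe in words but do not exploit in the estimate: apply Bernstein not to $\Delta_q f\,\tilde\Delta_q g$ but to the localized piece $\Delta_j(\Delta_q f\,\tilde\Delta_q g)$, whose Fourier support sits in a ball of radius $\sim 2^j$; since only $q\ge j-k_0$ contributes, $2^j\lesssim 2^q$ and this is the cheaper scale. The upgrade from $L^{p_0}$ to $L^p$ (after H\"older on the product) then costs $2^{jd(1/p_0-1/p)}$, and the exponent computation gives precisely $2^{j\sigma}\|\Delta_j(\Delta_q f\,\tilde\Delta_q g)\|_{L^p}\le C\,2^{(j-q)(s_1+s_2)}\alpha_q\beta_q$, so that $s_1+s_2>0$ is all that is needed. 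With this correction the rest of your argument (the $\ell^1$ bound $\sum_{n\le k_0}2^{n(s_1+s_2)}=2^{k_0(s_1+s_2)}/(1-2^{-(s_1+s_2)})$, discrete Young into $\ell^{r_0}$ with $1/r_0=1/r_1+1/r_2$, H\"older, and the embedding $\ell^{r_0}\hookrightarrow\ell^{r}$ from $1/r\le 1/r_0$) goes through and yields the stated constant.
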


Some conclusions will be used in subsequent analysis. The first one
is the classical Bernstein's inequality.

\begin{lem}\label{lem2.1}
Let $k\in\mathbb{N}$ and $0<R_{1}<R_{2}$. There exists a constant
$C$, depending only on $R_{1},R_{2}$ and $d$, such that for all
$1\leq a\leq b\leq\infty$ and $f\in L^{a}$,
$$
\mathrm{Supp}\ \mathcal{F}f\subset
\mathbf{B}(0,R_{1}\lambda)\Rightarrow\sup_{|\alpha|=k}\|\partial^{\alpha}f\|_{L^{b}}
\leq C^{k+1}\lambda^{k+d(\frac{1}{a}-\frac{1}{b})}\|f\|_{L^{a}};
$$
$$
\mathrm{Supp}\ \mathcal{F}f\subset
\mathbf{C}(0,R_{1}\lambda,R_{2}\lambda) \Rightarrow
C^{-k-1}\lambda^{k}\|f\|_{L^{a}}\leq
\sup_{|\alpha|=k}\|\partial^{\alpha}f\|_{L^{a}}\leq
C^{k+1}\lambda^{k}\|f\|_{L^{a}}.
$$
Here $\mathcal{F}f$ represents the Fourier transform on $f$.
\end{lem}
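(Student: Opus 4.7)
The plan is to reduce both parts to Young's convolution inequality after a scaling argument, using two fixed smooth cut-offs. Pick once and for all radial functions $\chi_{0},\tilde\varphi\in C_{c}^{\infty}(\mathbb{R}^{d})$ with $\chi_{0}\equiv 1$ on $\overline{\mathbf{B}(0,R_{1})}$, $\mathrm{Supp}\,\chi_{0}\subset\mathbf{B}(0,2R_{1})$, and $\tilde\varphi\equiv 1$ on $\mathbf{C}(0,R_{1},R_{2})$, $\mathrm{Supp}\,\tilde\varphi\subset\mathbf{C}(0,R_{1}/2,2R_{2})$. Set $K:=\mathcal{F}^{-1}\chi_{0}$ and $\Theta:=\mathcal{F}^{-1}\tilde\varphi$; both are Schwartz.

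For the ball estimate, the spectral assumption on $f$ gives $\hat f(\xi)=\chi_{0}(\xi/\lambda)\hat f(\xi)$, and taking the inverse Fourier transform yields $f=\lambda^{d}K(\lambda\cdot)\ast f$. Differentiating and applying Young's convolution inequality with exponents $1+1/b=1/c+1/a$, followed by the change of variable $y=\lambda x$ in the $L^{c}$-norm, one obtains
$$\|\partial^{\alpha}f\|_{L^{b}}\leq \lambda^{d+k}\|(\partial^{\alpha}K)(\lambda\cdot)\|_{L^{c}}\|f\|_{L^{a}}=\lambda^{k+d(1/a-1/b)}\|\partial^{\alpha}K\|_{L^{c}}\|f\|_{L^{a}}.$$
The factor $C^{k+1}$ then comes from the crude bound $\|\partial^{\alpha}K\|_{L^{\infty}}\leq(2\pi)^{-d}\|\xi^{\alpha}\chi_{0}\|_{L^{1}}\leq(2R_{1})^{k}|\mathrm{Supp}\,\chi_{0}|$ together with the analogous $L^{1}$-bound (obtained by integration against a polynomial weight), interpolated to every $c\in[1,\infty]$, uniformly in $\alpha$ with $|\alpha|=k$. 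The upper estimate in the annular case is proved identically, replacing $\chi_{0}$ by $\tilde\varphi$ and specializing to $b=a$.

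The lower annular bound is the only non-routine step. I would start from the elementary identity $|\eta|^{2k}=\sum_{|\alpha|=k}\binom{k}{\alpha}\eta^{2\alpha}$ valid on $\mathbb{R}^{d}\setminus\{0\}$. Since $\tilde\varphi$ vanishes near $0$, the symbols
$$\psi_{\alpha}(\eta):=(-i)^{k}\binom{k}{\alpha}\,\tilde\varphi(\eta)\,\eta^{\alpha}/|\eta|^{2k},\qquad |\alpha|=k,$$
lie in $C_{c}^{\infty}(\mathbb{R}^{d})\subset\mathcal{S}$. Inserting this identity in $\hat f(\xi)=\tilde\varphi(\xi/\lambda)\hat f(\xi)$ and using $\xi^{\alpha}\hat f(\xi)=(-i)^{k}\widehat{\partial^{\alpha}f}(\xi)$ yields the reconstruction formula
$$\hat f(\xi)=\lambda^{-k}\sum_{|\alpha|=k}\psi_{\alpha}(\xi/\lambda)\,\widehat{\partial^{\alpha}f}(\xi),$$
whose inverse Fourier transform, combined with Young's inequality, gives
$$\|f\|_{L^{a}}\leq \lambda^{-k}\sum_{|\alpha|=k}\|\mathcal{F}^{-1}\psi_{\alpha}\|_{L^{1}}\,\|\partial^{\alpha}f\|_{L^{a}}.$$

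The main technical obstacle, and the one place where the exponential constant $C^{k+1}$ must be tracked carefully, is to control $\sum_{|\alpha|=k}\|\mathcal{F}^{-1}\psi_{\alpha}\|_{L^{1}}$. Here I would use the standard Schwartz-class estimate $\|\mathcal{F}^{-1}\psi_{\alpha}\|_{L^{1}}\lesssim \sup_{|\beta|\leq d+1}\|(1+|\eta|)^{d+1}\partial^{\beta}\psi_{\alpha}\|_{L^{\infty}}$ together with Leibniz applied on the fixed annular support of $\tilde\varphi$, on which $\eta^{\alpha}/|\eta|^{2k}$ and its derivatives admit geometric-in-$k$ bounds. Absorbing the binomial coefficient $\binom{k}{\alpha}\leq k!$ against the Schwartz seminorms and the multi-index count $\#\{|\alpha|=k\}\leq(d+k)^{d}\leq C^{k+1}$ delivers the desired $C^{k+1}\lambda^{-k}\sup_{|\alpha|=k}\|\partial^{\alpha}f\|_{L^{a}}$ bound, completing the proof.
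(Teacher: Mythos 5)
The paper never proves Lemma \ref{lem2.1}: it is quoted as the classical Bernstein inequality and implicitly referred to the literature (e.g.\ \cite{BCD,D}), so there is no in-paper argument to compare against; your proposal is the standard textbook proof (spectral cut-off plus Young's inequality for the ball and the upper annular bound, and the division by $|\eta|^{2k}$ via the multinomial identity for the lower bound), and its structure is sound. The one step that, as written, does not deliver the claimed constant is the treatment of the multinomial coefficients in the lower bound: you invoke $\binom{k}{\alpha}=k!/\alpha!\leq k!$ and propose to ``absorb'' this against the Schwartz seminorms, but $k!$ is not $O(C^{k+1})$ for any fixed $C$, so nothing can absorb it and the resulting constant would fail the geometric-in-$k$ form $C^{k+1}$ demanded by the statement. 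The fix is immediate and stays inside your own argument: keep the coefficient attached to $\psi_{\alpha}$ and sum, using $\sum_{|\alpha|=k}k!/\alpha!=d^{k}$ (equivalently, bound each coefficient by $d^{k}$), so that
$$\sum_{|\alpha|=k}\|\mathcal{F}^{-1}\psi_{\alpha}\|_{L^{1}}\leq C^{k+1}d^{k}\sup_{|\alpha|=k}\sup_{|\beta|\leq d+1}\bigl\|\partial^{\beta}\bigl(\tilde\varphi(\eta)\,\eta^{\alpha}/|\eta|^{2k}\bigr)\bigr\|_{L^{\infty}}\leq C'^{\,k+1},$$
where the inner sup is geometric in $k$ exactly as you argue (at most $d+1$ derivatives fall on $\eta^{\alpha}/|\eta|^{2k}$ on the fixed annulus, producing a factor at most $(Ck)^{d+1}\leq C^{k+1}$). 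With that replacement the lower bound, and hence the whole lemma, follows as you outlined; the ball and upper annular estimates, including your interpolation of $\|\partial^{\alpha}K\|_{L^{c}}$ between the $L^{1}$ and $L^{\infty}$ bounds, are fine.
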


As a direct corollary of the above inequality, we have
\begin{rem}\label{rem2.1} For all
multi-index $\alpha$, it holds that
$$
\|\partial^\alpha f\|_{B^s_{p, r}}\leq C\|f\|_{B^{s + |\alpha|}_{p,
r}}.
$$
\end{rem}

The second one is the embedding properties in Besov spaces.
\begin{lem}\label{lem2.2} Let $s\in \mathbb{R}$ and $1\leq
p,r\leq\infty,$ then
$$B^{s}_{p,r}\hookrightarrow B^{\tilde{s}}_{p,\tilde{r}}\ \ \
\mbox{whenever}\ \ \tilde{s}<s\ \ \mbox{or}\ \ \tilde{s}=s \ \
\mbox{and}\ \ r\leq\tilde{r};$$
$$B^{s}_{p,r}\hookrightarrow B^{s-N(\frac{1}{p}-\frac{1}{\tilde{p}})}_{\tilde{p},r}\ \ \
\mbox{whenever}\ \ \tilde{p}>p;$$
$$B^{d/p}_{p,1}(1\leq p<\infty)\hookrightarrow\mathcal{C}_{0},\ \ \ B^{0}_{\infty,1}\hookrightarrow\mathcal{C}\cap L^{\infty},$$
where $\mathcal{C}_{0}$ is the space of continuous bounded functions
which decay at infinity.
\end{lem}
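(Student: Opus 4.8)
The plan is to reduce each of the three embeddings to an elementary estimate on the individual dyadic blocks $\Delta_q f$ ($q\geq -1$), combined with the monotonicity of the $\ell^r$-norms in $r$ and Bernstein's inequality (Lemma \ref{lem2.1}). For the first embedding I would distinguish the two stated cases. If $\tilde{s}=s$ and $r\leq\tilde{r}$, then applying the inclusion $\ell^r\hookrightarrow\ell^{\tilde{r}}$ (which has norm $1$) to the sequence $\big(2^{qs}\|\Delta_q f\|_{L^p}\big)_{q\geq-1}$ gives $\|f\|_{B^s_{p,\tilde{r}}}\leq\|f\|_{B^s_{p,r}}$ at once. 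If $\tilde{s}<s$, I would write $2^{q\tilde{s}}\|\Delta_q f\|_{L^p}=2^{q(\tilde{s}-s)}\,2^{qs}\|\Delta_q f\|_{L^p}$ and observe that, since $q\geq-1$ and $\tilde{s}-s<0$, the sequence $\big(2^{q(\tilde{s}-s)}\big)_{q\geq-1}$ belongs to every $\ell^{\rho}$, $1\leq\rho\leq\infty$; a direct estimate of the $\ell^{\tilde{r}}$-norm then yields $\|f\|_{B^{\tilde{s}}_{p,\tilde{r}}}\leq C\,\|f\|_{B^s_{p,\infty}}$ with $C$ depending only on $s-\tilde{s}$ (and $\tilde{r}$), after which the first case lets one pass from $B^s_{p,\infty}$ back to $B^s_{p,r}$.

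For the second embedding I would fix $\tilde{p}>p$ and $f\in B^s_{p,r}$, and note that for every $q\geq-1$ the spectrum of $\Delta_q f$ is contained in a ball of radius comparable to $2^q$ (of fixed radius when $q=-1$). Hence the first inequality of Lemma \ref{lem2.1}, applied with $k=0$, $a=p$, $b=\tilde{p}$, gives $\|\Delta_q f\|_{L^{\tilde{p}}}\leq C\,2^{qN(\frac1p-\frac1{\tilde{p}})}\|\Delta_q f\|_{L^p}$ with a constant uniform in $q$. Multiplying by $2^{q(s-N(\frac1p-\frac1{\tilde{p}}))}$ and then taking the $\ell^r$-norm in $q$ produces $\|f\|_{B^{s-N(\frac1p-\frac1{\tilde{p}})}_{\tilde{p},r}}\leq C\|f\|_{B^s_{p,r}}$, which is exactly the claim.

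For the last line I would first use the second embedding with $s=N/p$, $r=1$, $\tilde{p}=\infty$ to get $B^{N/p}_{p,1}\hookrightarrow B^0_{\infty,1}$, so that it suffices to prove $B^0_{\infty,1}\hookrightarrow\mathcal{C}\cap L^\infty$ and then to improve continuity to decay at infinity when $p<\infty$. For $f\in B^0_{\infty,1}$ one has $\sum_{q\geq-1}\|\Delta_q f\|_{L^\infty}=\|f\|_{B^0_{\infty,1}}<\infty$, so the partial sums $S_Q f$ converge uniformly; since each $\Delta_q f$ is a smooth bounded function this shows $f\in\mathcal{C}\cap L^\infty$ with $\|f\|_{L^\infty}\leq\|f\|_{B^0_{\infty,1}}$, i.e. $B^0_{\infty,1}\hookrightarrow\mathcal{C}\cap L^\infty$. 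To upgrade to $\mathcal{C}_0$ when $1\leq p<\infty$, I would use that, because $r=1$, the tail of the convergent series $\sum_{q}2^{qN/p}\|\Delta_q f\|_{L^p}$ tends to $0$, hence $\|f-S_Q f\|_{B^{N/p}_{p,1}}\to0$ and therefore $S_Q f\to f$ in $L^\infty$ by the embedding just established; each $S_Q f$ is a band-limited function lying in $L^p$ (a finite sum of the blocks $\Delta_q f$, each of which lies in $L^p$), and since $\mathcal{C}_0$ is a closed subspace of $L^\infty$ it then suffices to know that each such $S_Q f$ belongs to $\mathcal{C}_0$. This last fact — that a band-limited $L^p$ function with $p<\infty$ vanishes at infinity — is the only genuinely non-formal point; it follows by realizing the function as the convolution of its $L^p$ representative with a Schwartz kernel and invoking continuity of translation in $L^p$, and it is precisely here that the hypothesis $p<\infty$ enters. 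Everything else is bookkeeping with dyadic norms and Bernstein's inequality, so no serious obstacle is expected.
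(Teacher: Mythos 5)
The paper states Lemma \ref{lem2.2} without proof, citing it as a standard tool from the references on Littlewood--Paley theory, so there is no in-paper argument to compare against. Your proof is the standard textbook one and is correct: the monotonicity of $\ell^r$-norms and the summability of $\bigl(2^{q(\tilde{s}-s)}\bigr)_{q\ge-1}$ handle the first embedding, Bernstein's inequality on each block gives the second, and the uniform (absolute) convergence of $\sum_q\Delta_qf$ in $L^\infty$, together with the fact that a band-limited $L^p$ function with $p<\infty$ vanishes at infinity, gives the third.
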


The third one is the traditional Moser-type inequality.

\begin{prop}\label{prop2.3}
Let $s>0$ and $1\leq p,r\leq\infty$. Then $B^{s}_{p,r}\cap
L^{\infty}$ is an algebra. Furthermore, it holds that
$$
\|fg\|_{B^{s}_{p,r}}\leq
C(\|f\|_{L^{\infty}}\|g\|_{B^{s}_{p,r}}+\|g\|_{L^{\infty}}\|f\|_{B^{s}_{p,r}}).
$$
\end{prop}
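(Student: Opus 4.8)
The plan is to reduce the algebra statement to a frequency-localized estimate obtained from the Bony decomposition $fg = T_fg + T_gf + R(f,g)$, and to bound each of the three pieces separately in $B^s_{p,r}$. For the paraproduct $T_fg = \sum_q S_{q-1}f\,\Delta_q g$, the key point is that $\Delta_j(S_{q-1}f\,\Delta_q g)$ vanishes unless $|j-q|\le 4$ by Proposition~\ref{prop2.1}, so that $\|\Delta_j(T_fg)\|_{L^p}\lesssim \sum_{|q-j|\le 4}\|S_{q-1}f\|_{L^\infty}\|\Delta_q g\|_{L^p}\lesssim \|f\|_{L^\infty}\sum_{|q-j|\le 4}\|\Delta_q g\|_{L^p}$, where I have used H\"older's inequality and the uniform bound $\|S_{q-1}f\|_{L^\infty}\le C\|f\|_{L^\infty}$ coming from Young's inequality and the $L^1$-normalization of the convolution kernels $\tilde\omega,\omega$. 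Multiplying by $2^{js}$, taking $\ell^r$ norms in $j$, and using Young's convolution inequality for sequences (the shift by at most $4$ costs only a harmless constant since $s$ is fixed) gives $\|T_fg\|_{B^s_{p,r}}\lesssim \|f\|_{L^\infty}\|g\|_{B^s_{p,r}}$. By symmetry, $\|T_gf\|_{B^s_{p,r}}\lesssim \|g\|_{L^\infty}\|f\|_{B^s_{p,r}}$; here one must note that the bound on $T_gf$ uses $\|g\|_{L^\infty}$ and $\|f\|_{B^s_{p,r}}$, which is exactly one of the two terms on the right-hand side.

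For the remainder $R(f,g)=\sum_q \Delta_q f\,\tilde\Delta_q g$, I would write, for fixed $j$, $\Delta_j R(f,g)=\sum_{q\ge j-N_0}\Delta_j(\Delta_qf\,\tilde\Delta_q g)$ for a fixed integer $N_0$ (the spectrum of $\Delta_qf\,\tilde\Delta_q g$ sits in a ball of radius $\sim 2^q$, so it contributes to $\Delta_j$ only when $q\gtrsim j$). Then $\|\Delta_j R(f,g)\|_{L^p}\lesssim \sum_{q\ge j-N_0}\|\Delta_qf\|_{L^p}\|\tilde\Delta_qg\|_{L^\infty}\lesssim \|g\|_{L^\infty}\sum_{q\ge j-N_0}\|\Delta_qf\|_{L^p}$, and after multiplying by $2^{js}$ one has $2^{js}\|\Delta_jR\|_{L^p}\lesssim \|g\|_{L^\infty}\sum_{q\ge j-N_0}2^{(j-q)s}\bigl(2^{qs}\|\Delta_qf\|_{L^p}\bigr)$. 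Since $s>0$, the kernel $2^{(j-q)s}\mathbf 1_{q\ge j-N_0}$ is summable in $q-j$, so Young's inequality for series yields $\|R(f,g)\|_{B^s_{p,r}}\lesssim \|g\|_{L^\infty}\|f\|_{B^s_{p,r}}$; alternatively, one could put $\|f\|_{L^\infty}$ on $\Delta_qf$ and $\|g\|_{B^s_{p,r}}$ on $\tilde\Delta_qg$, either choice being acceptable. Summing the three bounds gives the claimed Moser estimate, and the algebra property $B^s_{p,r}\cap L^\infty$ follows by taking $f=g$ (or by the estimate together with the fact that the right-hand side is finite whenever $f,g\in B^s_{p,r}\cap L^\infty$).

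The one place requiring genuine care — the main obstacle, such as it is — is the positivity hypothesis $s>0$: it is precisely what makes the geometric series in the remainder estimate converge, and it is also implicitly what guarantees that the ball-supported low-frequency piece $\Delta_{-1}$ does not spoil the summation. For $r<\infty$ one should be slightly careful to apply Minkowski's inequality correctly when exchanging the $\ell^r_j$ and the sum over $q$, but since the convolution kernels in $q-j$ are in $\ell^1$ with norm depending only on $s$, this is routine. No assumption $r\ge 1$ beyond what is stated is needed, and the constant $C$ depends only on $s$, $N$, and the fixed profile functions $\varphi,\chi$.
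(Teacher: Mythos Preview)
Your proof is correct and follows the same Bony-decomposition approach as the paper, which does not prove Proposition~\ref{prop2.3} separately but establishes its generalization Proposition~\ref{prop4.1} in the Appendix via identical paraproduct estimates. The only cosmetic difference is in the treatment of the remainder: the paper invokes the black-box bound of Proposition~\ref{prop2.2} together with the embedding $L^{\infty}\hookrightarrow B^{0}_{\infty,\infty}$, whereas you unwrap that same argument by hand using the spectral-support observation and the geometric series in $s>0$.
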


On the other hand, we present the definition of Chemin-Lerner's
spaces first introduced by J.-Y. Chemin and N. Lerner \cite{C},
which is the refinement of the spaces $L^{\rho}_{T}(B^{s}_{p,r})$.

\begin{defn}\label{defn2.3}
For $T>0, s\in\mathbb{R}, 1\leq r,\rho\leq\infty$, set (with the
usual convention if $r=\infty$)
$$\|f\|_{\widetilde{L}^{\rho}_{T}(B^{s}_{p,r})}:
=\Big(\sum_{q\geq-1}(2^{qs}\|\Delta_{q}f\|_{L^{\rho}_{T}(L^{p})})^{r}\Big)^{\frac{1}{r}}.$$
Then we define the space $\widetilde{L}^{\rho}_{T}(B^{s}_{p,r})$ as
the completion of $\mathcal{S}$ over $(0,T)\times\mathbb{R}^{d}$ by
the above norm.
\end{defn}
Furthermore, we define
$$\widetilde{\mathcal{C}}_{T}(B^{s}_{p,r}):=\widetilde{L}^{\infty}_{T}(B^{s}_{p,r})\cap\mathcal{C}([0,T],B^{s}_{p,r}), $$
where the index $T$ will be omitted when $T=+\infty$. Let us
emphasize that

\begin{rem}\label{rem2.2}
\rm According to Minkowski's inequality, it holds that
$$\|f\|_{\widetilde{L}^{\rho}_{T}(B^{s}_{p,r})}\leq\|f\|_{L^{\rho}_{T}(B^{s}_{p,r})}\,\,\,
\mbox{if}\,\, r\geq\rho;\ \ \ \
\|f\|_{\widetilde{L}^{\rho}_{T}(B^{s}_{p,r})}\geq\|f\|_{L^{\rho}_{T}(B^{s}_{p,r})}\,\,\,
\mbox{if}\,\, r\leq\rho.
$$\end{rem}
Then, we state the property of continuity for product in
Chemin-Lerner's spaces $\widetilde{L}^{\rho}_{T}(B^{s}_{p,r})$.
\begin{prop}\label{prop2.4}
The following estimate holds:
$$
\|fg\|_{\widetilde{L}^{\rho}_{T}(B^{s}_{p,r})}\leq
C(\|f\|_{L^{\rho_{1}}_{T}(L^{\infty})}\|g\|_{\widetilde{L}^{\rho_{2}}_{T}(B^{s}_{p,r})}
+\|g\|_{L^{\rho_{3}}_{T}(L^{\infty})}\|f\|_{\widetilde{L}^{\rho_{4}}_{T}(B^{s}_{p,r})})
$$
whenever $s>0, 1\leq p\leq\infty,
1\leq\rho,\rho_{1},\rho_{2},\rho_{3},\rho_{4}\leq\infty$ and
$$\frac{1}{\rho}=\frac{1}{\rho_{1}}+\frac{1}{\rho_{2}}=\frac{1}{\rho_{3}}+\frac{1}{\rho_{4}}.$$
As a direct corollary, one has
$$\|fg\|_{\widetilde{L}^{\rho}_{T}(B^{s}_{p,r})}
\leq
C\|f\|_{\widetilde{L}^{\rho_{1}}_{T}(B^{s}_{p,r})}\|g\|_{\widetilde{L}^{\rho_{2}}_{T}(B^{s}_{p,r})}$$
whenever $s\geq N/p,
\frac{1}{\rho}=\frac{1}{\rho_{1}}+\frac{1}{\rho_{2}}.$
\end{prop}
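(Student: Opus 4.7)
The plan is to adapt the standard Besov-space product inequality, which is proved via Bony's decomposition (Definition 2.2) together with paraproduct and remainder estimates, to the time-refined Chemin-Lerner setting by inserting H\"older's inequality in the time variable at each step. Applying the dyadic block $\Delta_q$ to $fg$ I write
$$\Delta_q(fg)=\Delta_q T_f g+\Delta_q T_g f+\Delta_q R(f,g)$$
and bound each of the three pieces in $L^{\rho}_T(L^p)$, multiply by $2^{qs}$, and take the $\ell^r_q$ norm.

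For the paraproduct $T_f g=\sum_p S_{p-1}f\,\Delta_p g$, Proposition \ref{prop2.1} forces $\Delta_q(S_{p-1}f\,\Delta_p g)\equiv 0$ when $|p-q|\geq 5$, so only a fixed finite number of terms near $p\approx q$ survive. For each such $p$ I bound pointwise in $x$ by $\|S_{p-1}f\,\Delta_p g\|_{L^p}\leq\|S_{p-1}f\|_{L^\infty}\|\Delta_p g\|_{L^p}$, then apply H\"older in time with $1/\rho=1/\rho_1+1/\rho_2$ and use $\|S_{p-1}f\|_{L^{\rho_1}_T(L^\infty)}\leq C\|f\|_{L^{\rho_1}_T(L^\infty)}$ uniformly in $p$ (since $S_{p-1}$ is convolution by an $L^1$ kernel of bounded mass). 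Multiplying by $2^{qs}$ and taking $\ell^r_q$ yields the first term on the right-hand side of the stated estimate; the second paraproduct $T_g f$ is handled symmetrically and produces the second term.

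The remainder $R(f,g)=\sum_p \Delta_p f\,\tilde\Delta_p g$ is the main technical point, because the spectral support of each summand sits in a ball of radius $\sim 2^p$ rather than an annulus, so $\Delta_q(\Delta_p f\,\tilde\Delta_p g)$ only vanishes for $p\leq q-N_0$, leaving the infinite tail $p\geq q-N_0$ to control. Estimating $\|\Delta_p f\,\tilde\Delta_p g\|_{L^p}\leq\|\Delta_p f\|_{L^\infty}\|\tilde\Delta_p g\|_{L^p}$ and applying H\"older in time, I obtain
$$2^{qs}\|\Delta_q R(f,g)\|_{L^{\rho}_T(L^p)}\leq C\|f\|_{L^{\rho_1}_T(L^\infty)}\sum_{p\geq q-N_0}2^{(q-p)s}\cdot 2^{ps}\|\tilde\Delta_p g\|_{L^{\rho_2}_T(L^p)},$$
which, after the substitution $k=q-p$, is a one-sided discrete convolution of the $\ell^r$ sequence $\{2^{ps}\|\tilde\Delta_p g\|_{L^{\rho_2}_T(L^p)}\}$ with a geometric $\ell^1$ kernel $\{2^{ks}\mathbf{1}_{k\leq N_0}\}$ that is summable precisely because $s>0$; Young's inequality in $\ell^r$ then delivers the first of the two terms in the claimed bound, and the symmetric splitting that instead places the $L^\infty$ norm on $g$ delivers the second.

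The corollary follows by combining the embedding $B^{N/p}_{p,1}\hookrightarrow L^\infty$ from Lemma \ref{lem2.2}, applied at every time $t$, with the hypothesis $s\geq N/p$: one gets $\|f\|_{L^{\rho_1}_T(L^\infty)}\leq C\|f\|_{\widetilde{L}^{\rho_1}_T(B^s_{p,r})}$ and similarly for $g$ in the appropriate time exponent, after which the two terms collapse into a single product. The genuinely delicate step in the whole argument is the bookkeeping inside the remainder estimate: unlike the fixed-time Besov version, each $L^\infty_x$ factor must be carried with the correct $L^{\rho_j}_T$ exponent through H\"older, and the geometric-series summation over $p\geq q-N_0$ must close using \emph{only} the hypothesis $s>0$, with no extra smoothness available on either factor.
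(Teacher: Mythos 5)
Your proof is correct, and it follows essentially the same route the paper itself uses: the paper states Proposition \ref{prop2.4} without proof as a standard tool, but its Appendix proof of the analogous Propositions \ref{prop4.1}--\ref{prop4.2} is exactly your argument --- Bony's decomposition, finite-overlap paraproduct bounds via Proposition \ref{prop2.1}, and H\"older in time for the Chemin--Lerner refinement. The only cosmetic difference is that the paper disposes of the remainder by invoking Proposition \ref{prop2.2}, whereas you sum the tail $p\geq q-N_{0}$ directly with the geometric series permitted by $s>0$; both are standard and equivalent (and your corollary inherits the same implicit caveat as the paper's statement, namely $r=1$ in the borderline case $s=N/p$).
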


In addition, the estimates of commutators in
$\widetilde{L}^{\rho}_{T}(B^{s}_{p,1})$ spaces are also frequently
used in the subsequent analysis. The indices $s,p$ behave just as in
the stationary case \cite{D,FXZ} whereas the time exponent $\rho$
behaves according to H\"{o}lder inequality.
\begin{lem}\label{lem2.3}
Let $1\leq p<\infty$ and $1\leq \rho\leq\infty$,  then the following
inequalities are true:
\begin{eqnarray*}
&&2^{qs}\|[f,\Delta_{q}]\mathcal{A}g\|_{L^{\rho}_{T}(L^{p})}\nonumber\\&\leq&
\left\{
\begin{array}{l}
 Cc_{q}\|f\|_{\widetilde{L}^{\rho_{1}}_{T}(B^{s}_{p,1})}\|g\|_{\widetilde{L}^{\rho_{2}}_{T}(B^{s}_{p,1})},\
\ s=1+N/p,\\
 Cc_{q}\|f\|_{\widetilde{L}^{\rho_{1}}_{T}(B^{s}_{p,1})}\|g\|_{\widetilde{L}^{\rho_{2}}_{T}(B^{s+1}_{p,1})},\
 \ s=N/p,\\
 Cc_{q}\|f\|_{\widetilde{L}^{\rho_{1}}_{T}(B^{s+1}_{p,1})}\|g\|_{\widetilde{L}^{\rho_{2}}_{T}(B^{s}_{p,1})},\ \
s=N/p,
\end{array} \right.
\end{eqnarray*}
where the commutator $[\cdot,\cdot]$ is defined by $[f,g]=fg-gf$,
the operator $\mathcal{A}=\mathrm{div}$ or $\mathrm{\nabla}$, $C$ is
a generic constant, and $c_{q}$ denotes a sequence such that
$\|(c_{q})\|_{ {l^{1}}}\leq
1,\frac{1}{\rho}=\frac{1}{\rho_{1}}+\frac{1}{\rho_{2}}.$
\end{lem}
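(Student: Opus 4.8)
The plan is to prove the commutator estimates in Lemma \ref{lem2.3} via the Bony decomposition of the product $f\mathcal{A}g$, localized through $\Delta_q$. Writing $f\,\mathcal{A}g = T_f(\mathcal{A}g) + T_{\mathcal{A}g}f + R(f,\mathcal{A}g)$, one computes $[f,\Delta_q]\mathcal{A}g = f\Delta_q(\mathcal{A}g) - \Delta_q(f\mathcal{A}g)$ and decomposes the right-hand side correspondingly. The only genuinely delicate piece is the paraproduct $T_f(\mathcal{A}g)$: here the classical trick is to write, by Proposition \ref{prop2.1}, $\Delta_q(T_f \mathcal{A}g) = \sum_{|q-q'|\le 4}\Delta_q(S_{q'-1}f\,\Delta_{q'}\mathcal{A}g)$ and pair it against $S_{q-1}f\,\Delta_q\mathcal{A}g$, so that the dangerous high-regularity contribution (the full $\mathcal{A}g$ at frequency $2^q$) cancels up to a commutator of $\Delta_q$ with the slowly varying multiplier $S_{q'-1}f$. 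One then exploits the kernel representation $\Delta_q h = 2^{qN}\int \omega(2^q y)h(x-y)\,dy$, a first-order Taylor expansion of $S_{q'-1}f(x) - S_{q'-1}f(x-y)$, and the fact that $\nabla S_{q'-1}f$ is controlled by $\|\nabla f\|_{L^\infty}$ (or, in the critical scaling, by $\|f\|_{B^{1+N/p}_{p,1}}$ via Bernstein, Lemma \ref{lem2.1}) to gain the factor $2^{-q}$ that compensates the derivative $\mathcal{A}$.

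Concretely, I would organize the argument in the following steps. First, fix the split of $[f,\Delta_q]\mathcal{A}g$ into three pieces: $[f,\Delta_q]$ applied through the paraproduct $T_f\mathcal{A}g$, the term $\Delta_q(T_{\mathcal{A}g}f)$, and the term $\Delta_q R(f,\mathcal{A}g)$, plus the leftover $f\Delta_q\mathcal{A}g - S_{q-1}f\,\Delta_q\mathcal{A}g$. For the genuine commutator piece, bound it in $L^p$ by $C 2^{-q}\|\nabla S_{q'-1}f\|_{L^\infty}\|\Delta_{q'}\mathcal{A}g\|_{L^p}$ summed over $|q-q'|\le 4$, so that $\|\nabla S_{q'-1}f\|_{L^\infty} \lesssim \|f\|_{B^{1+N/p}_{p,1}}$ in the case $s=1+N/p$ (and $\lesssim \|f\|_{B^{1+N/p}_{p,1}}$ as well, since $B^{N/p}_{p,1}\hookrightarrow L^\infty$, when we must instead place $\mathcal{A}g$ at regularity $s+1$), and $2^{-q}\|\Delta_{q'}\mathcal{A}g\|_{L^p} \sim \|\Delta_{q'}g\|_{L^p}$ recovers the $B^{s}_{p,1}$ or $B^{s+1}_{p,1}$ norm of $g$. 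For the term $S_{q-1}f\,\Delta_q\mathcal{A}g$ paired against nothing — i.e. the piece $\Delta_q(T_{\mathcal{A}g}f)$ — use that $\mathcal{A}g$ at low frequency is multiplied by $f$ at frequency $\sim 2^q$, giving $\|\Delta_q T_{\mathcal{A}g}f\|_{L^p}\lesssim \sum_{q'\ge q-3}\|S_{q'-1}\mathcal{A}g\|_{L^\infty}\|\Delta_{q'}f\|_{L^p}$, and absorb the derivative into the summation over $q'$ via Bernstein. The remainder $\Delta_q R(f,\mathcal{A}g)$ is handled by Proposition \ref{prop2.2}, with $\mathcal{A}g$ contributing one extra derivative that is exactly the reason $g$ sits at regularity $s+1$ in the second and third cases, while $f$ absorbs the derivative in the first case by sitting at $s+1=2+N/p$ — here one checks $s_1+s_2>0$ holds throughout. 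Finally, collect the three cases, use the definition of $\widetilde{L}^\rho_T(B^s_{p,1})$ and H\"older in time to distribute $1/\rho = 1/\rho_1 + 1/\rho_2$, and repackage the constants into a summable sequence $(c_q)$ with $\|(c_q)\|_{\ell^1}\le 1$.

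The main obstacle I anticipate is the bookkeeping at the borderline regularity $s = N/p$ (resp.\ $s = 1 + N/p$): one must be careful about which factor carries the "extra" derivative produced by $\mathcal{A}$ and which factor must be taken in $L^\infty$, and the two asymmetric cases ($f$ smoother vs.\ $g$ smoother) require symmetric but not identical treatments of the paraproduct and remainder. In particular, in the case $s=N/p$ the paraproduct $T_f\mathcal{A}g$ needs $\nabla f \in L^\infty$, which is \emph{not} implied by $f \in B^{N/p}_{p,1}$ alone — this forces one to assume the extra regularity on whichever of $f,g$ is listed at index $s+1$, and to use the embedding $B^{N/p}_{p,1}\hookrightarrow L^\infty$ (Lemma \ref{lem2.2}) for the other factor. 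Keeping the time-exponent H\"older splitting consistent across all three pieces of the Bony decomposition, while ensuring all the Besov indices in the remainder estimate (Proposition \ref{prop2.2}) have strictly positive sum, is the part that demands the most care; the rest is a routine assembly of Bernstein's inequality, the kernel bound $\|\omega(2^q\cdot)\,2^{qN}\|_{L^1}\le C$, and the almost-orthogonality from Proposition \ref{prop2.1}.
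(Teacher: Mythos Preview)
The paper does not supply its own proof of Lemma~\ref{lem2.3}: it merely states the result, preceded by the remark that ``the indices $s,p$ behave just as in the stationary case \cite{D,FXZ} whereas the time exponent $\rho$ behaves according to H\"older inequality.'' Your proposal --- Bony decomposition of $f\mathcal{A}g$, the kernel/Taylor commutator trick to gain a factor $2^{-q}$ on the paraproduct piece $T_f(\mathcal{A}g)$, a direct bound on $\Delta_q(T_{\mathcal{A}g}f)$, Proposition~\ref{prop2.2} for the remainder, and a final H\"older splitting in time --- is exactly the standard argument one finds in the cited references, so there is nothing to compare: you are reconstructing the proof the paper is pointing to.

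One small slip worth fixing: in the first case $s=1+N/p$ you write that ``$f$ absorbs the derivative in the first case by sitting at $s+1=2+N/p$,'' but in that case both $f$ and $g$ are only assumed in $B^{s}_{p,1}$. The remainder estimate nevertheless closes: with $p_1=p_2=p$ in Proposition~\ref{prop2.2} and $f\in B^{1+N/p}_{p,1}$, $\mathcal{A}g\in B^{N/p}_{p,1}$, one has $s_1+s_2=1+2N/p>0$ and lands in $B^{s_1+s_2-N/p}_{p,1}=B^{1+N/p}_{p,1}=B^s_{p,1}$, so no extra regularity on $f$ is needed there. Apart from this bookkeeping point, your outline is sound.
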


In the symmetrization, we shall face with some composition
functions. To estimate them, the following
 continuity result for compositions is necessary.
\begin{prop}\label{prop2.5}
Let $s>0$, $1\leq p, r, \rho\leq \infty$, $F\in
W^{[s]+1,\infty}_{loc}(I;\mathbb{R})$ with $F(0)=0$, $T\in
(0,\infty]$ and $v\in \widetilde{L}^{\rho}_{T}(B^{s}_{p,r})\cap
L^{\infty}_{T}(L^{\infty}).$ Then
$$\|F(v)\|_{\widetilde{L}^{\rho}_{T}(B^{s}_{p,r})}\leq
C(1+\|v\|_{L^{\infty}_{T}(L^{\infty})})^{[s]+1}\|v\|_{\widetilde{L}^{\rho}_{T}(B^{s}_{p,r})}.$$
\end{prop}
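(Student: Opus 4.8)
The plan is to reduce the composition estimate to a telescoping dyadic decomposition followed by an application of the first-order Taylor formula and Bernstein's inequality. First I would write $F(v)=\sum_{q\geq-1}\big(F(S_{q+1}v)-F(S_{q}v)\big)$, using $F(0)=0$ and the fact that $S_qv\to 0$ in $L^\infty$ as $q\to-\infty$ (indeed $S_0v=\Delta_{-1}v$ and the sum telescopes with first term $F(S_0v)=F(\Delta_{-1}v)$). By the mean value theorem, $F(S_{q+1}v)-F(S_qv)=m_q\,\Delta_q v$ where $m_q=\int_0^1 F'\big(S_qv+\tau\Delta_qv\big)\,d\tau$, so that $F(v)=\sum_q m_q\Delta_q v$ with $\|m_q\|_{L^\infty_T(L^\infty)}\leq \|F'\|_{L^\infty(J)}$ on the interval $J$ determined by $\|v\|_{L^\infty_T(L^\infty)}$.

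Next I would estimate $\|\Delta_{q'} F(v)\|_{L^\rho_T(L^p)}$ for each $q'\geq-1$ by splitting the sum over $q$ into $|q-q'|\leq N_0$ and $q\geq q'+N_0$ for a fixed integer $N_0$. For the ``close'' part the bound $\|\Delta_{q'}(m_q\Delta_q v)\|_{L^\rho_T(L^p)}\lesssim \|F'\|_{L^\infty}\,\|\Delta_q v\|_{L^\rho_T(L^p)}$ is immediate and contributes $\lesssim \|F'\|_{L^\infty}\,c_{q'}2^{-q's}\|v\|_{\widetilde L^\rho_T(B^s_{p,r})}$ after multiplying by $2^{q's}$ and using that $|q-q'|\leq N_0$. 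For the ``far'' part, since $\Delta_q v$ has spectrum in an annulus of size $2^q$ while $m_q$ has low-frequency content, one needs to exploit cancellation: here is where the hypothesis $F\in W^{[s]+1,\infty}_{loc}$ enters, and I would integrate by parts $[s]+1$ times (equivalently, use that $\Delta_{q'}$ applied to a product localized at frequency $\sim 2^q\gg 2^{q'}$ gains a factor $2^{-([s]+1)(q-q')}$ together with $[s]+1$ derivatives falling on $m_q$). Each derivative of $m_q$ produces factors controlled by $(1+\|v\|_{L^\infty_T(L^\infty)})$ times derivatives of $F$ up to order $[s]+1$ and up to $[s]+1$ factors of $S_qv$ or $\Delta_qv$, whose $L^\infty$ norms are bounded by $\|v\|_{L^\infty_T(L^\infty)}$; this yields the factor $(1+\|v\|_{L^\infty_T(L^\infty)})^{[s]+1}$. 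Summing the resulting geometric series in $q-q'$ (which converges because $[s]+1>s$) and using the $\ell^r$ (or $\ell^\infty$) summability of the sequence $(2^{qs}\|\Delta_q v\|_{L^\rho_T(L^p)})_q$ gives the claimed bound.

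Finally I would assemble the two pieces: taking $\ell^r$ norms in $q'$ of $2^{q's}\|\Delta_{q'}F(v)\|_{L^\rho_T(L^p)}$, both contributions are of the form $C(1+\|v\|_{L^\infty_T(L^\infty)})^{[s]+1}\|v\|_{\widetilde L^\rho_T(B^s_{p,r})}$, which is exactly the statement. The main obstacle is the ``far'' part: making the integration-by-parts / kernel-decay argument rigorous while keeping careful track of how many factors of $v$ (hence powers of $\|v\|_{L^\infty}$) each term of $\partial^k m_q$ carries, and confirming that $F\in W^{[s]+1,\infty}_{loc}$ is precisely enough regularity to license $[s]+1$ such integrations by parts so that the exponent in the decay $2^{-([s]+1)(q-q')}$ strictly dominates $2^{s(q-q')}$. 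The time exponent $\rho$ plays no role in the dyadic manipulations since all estimates are done blockwise in $x$ and then the $L^\rho_T$ norm is simply carried along by Minkowski's inequality, exactly as in the stationary case.
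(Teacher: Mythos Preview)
The paper does not supply a proof of Proposition~\ref{prop2.5}: it is quoted in Section~2 as a known tool (the stationary version is in \cite{BCD,D}, and the time exponent is carried along by Minkowski exactly as you say). So there is nothing to compare against; I will simply assess your argument.

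Your overall framework---the Meyer telescoping $F(v)=\sum_{q}\bigl(F(S_{q+1}v)-F(S_qv)\bigr)=\sum_q m_q\,\Delta_q v$---is the standard and correct starting point. But the frequency analysis of the ``far'' part is inverted, and this is a genuine gap. Two concrete problems:

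\emph{(i)} The function $m_q=\int_0^1 F'(S_qv+\tau\Delta_qv)\,d\tau$ does \emph{not} have ``low-frequency content''. It is a nonlinear composition of $F'$ with a function whose spectrum lies in a ball of radius $\sim 2^q$, and composition destroys spectral localization entirely. Hence for $q\gg q'$ there is no cancellation mechanism of the type you describe; the only available bound is the trivial one $\|\Delta_{q'}(m_q\Delta_qv)\|_{L^\rho_T(L^p)}\le \|F'\|_{L^\infty(J)}\|\Delta_qv\|_{L^\rho_T(L^p)}$, and the factor $2^{(q'-q)s}$ with $s>0$ is what makes $\sum_{q>q'}$ summable.

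\emph{(ii)} Your splitting omits the region $q\le q'-N_0$, and that is precisely where the hypothesis $F\in W^{[s]+1,\infty}_{loc}$ is used. Writing $f_q:=F(S_{q+1}v)-F(S_qv)$, the Fa\`a di Bruno formula together with Bernstein's inequality applied to $S_qv,\Delta_qv$ gives
\[
\|D^{[s]+1}f_q\|_{L^\rho_T(L^p)}\le C\bigl(1+\|v\|_{L^\infty_T(L^\infty)}\bigr)^{[s]}\,2^{q([s]+1)}\,\|\Delta_qv\|_{L^\rho_T(L^p)},
\]
whence, for $q'\ge0$,
\[
\|\Delta_{q'}f_q\|_{L^\rho_T(L^p)}\le C\,2^{-q'([s]+1)}\|D^{[s]+1}f_q\|_{L^\rho_T(L^p)}
\le C\bigl(1+\|v\|_{L^\infty_T(L^\infty)}\bigr)^{[s]}2^{(q-q')([s]+1)}\|\Delta_qv\|_{L^\rho_T(L^p)}.
\]
Since $[s]+1>s$, the sum $\sum_{q<q'}2^{(q-q')([s]+1-s)}$ converges and Young's inequality in $\ell^r$ finishes the job. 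So the ``integration by parts'' idea is right, but it belongs to the region $q<q'$, not $q>q'$; and the gain is $2^{(q-q')([s]+1)}$ with $q-q'<0$, not $2^{-([s]+1)(q-q')}$ with $q-q'>0$. Once you swap the roles of the two tails and drop the incorrect spectral claim about $m_q$, your outline becomes the standard proof.
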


Finally, we give the estimate of heat equation to end up this
section.
\begin{prop}\label{prop2.6}
Let $s\in \mathbb{R}$ and $1\leq\alpha,p,r\leq\infty$. Let $T>0,
u_{0}\in B^{s}_{p,r}$ and $f\in
\widetilde{L}^{\alpha}_{T}(B^{s-2+\frac{2}{\alpha}}_{p,r})$. Then
the problem of heat equation
$$\partial_{t}u-\mu\Delta u=f,\ \ \ u|_{t=0}=u_{0}$$
has a unique solution $u\in
\widetilde{L}^{\alpha}_{T}(B^{s+\frac{2}{\alpha}}_{p,r})\cap\widetilde{L}^{\infty}_{T}(B^{s}_{p,r})
$ and there exists a constant $C$ depending only on $N$ and such
that for all $\alpha_{1}\in[\alpha,+\infty]$, we have
$$\mu^{\frac{1}{\alpha_{1}}}\|u\|_{\widetilde{L}^{\alpha_{1}}_{T}(B^{s+\frac{2}{\alpha}}_{p,r})}\leq C\Big\{(1+T^{\frac{1}{\alpha_{1}}})\|u_{0}\|_{B^{s}_{p,r}}
+(1+T^{1+\frac{1}{\alpha_{1}}-\frac{1}{\alpha}})\mu^{\frac{1}{\alpha}-1}\|f\|_{\widetilde{L}^{\alpha}_{T}(B^{s-2+\frac{2}{\alpha}}_{p,r})}\Big\}.$$
In addition, if $r$ is finite then $u$ belongs to
$\mathcal{C}([0,T];B^{s}_{p,r})$.
\end{prop}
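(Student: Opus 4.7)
The plan is to run a block-by-block Fourier-localization argument: apply $\Delta_q$ to the equation, solve the resulting localized heat equation by Duhamel, exploit the spectrally localized decay of $e^{t\mu\Delta}$ on each dyadic ring, and then reconstruct the Chemin--Lerner norm by $\ell^r$ summation in $q$. The non-homogeneous low-frequency block $q=-1$ must be treated separately because the heat flow does not decay on it; this is the source of the explicit $T$-dependence in the statement.

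Setting $u_q=\Delta_q u$, $u_{0,q}=\Delta_q u_0$ and $f_q=\Delta_q f$, Duhamel gives
$$u_q(t)=e^{t\mu\Delta}u_{0,q}+\int_0^t e^{(t-\tau)\mu\Delta}f_q(\tau)\,d\tau,\qquad q\geq -1.$$
For $q\geq 0$, $u_q$ is spectrally supported in the annulus $\mathbf{C}(0,\frac{3}{4}2^q,\frac{8}{3}2^q)$, and a standard calculation (writing $e^{t\mu\Delta}$ as a Gaussian convolution whose Fourier transform is cut off by $\varphi(2^{-q}\cdot)$ and invoking Young's inequality on $L^1\ast L^p$) yields
$$\|e^{t\mu\Delta}\Delta_q g\|_{L^p}\leq C e^{-c\mu 2^{2q}t}\|\Delta_q g\|_{L^p},$$
with absolute constants $C,c>0$ depending only on the supports of $\varphi,\chi$. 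Inserting this in Duhamel, taking the $L^{\alpha_1}_T$ norm in time, and applying Young's convolution inequality with $1+\frac{1}{\alpha_1}=\frac{1}{\beta}+\frac{1}{\alpha}$ (the $L^\beta_T$ norm of the exponential kernel being bounded both by $C(\mu 2^{2q})^{-1/\beta}$ and by $T^{1/\beta}$) produces, for every $q\geq 0$, the expected estimate whose dependence on $\mu$, $2^q$ and $T$ matches the announced bound after multiplying by $\mu^{1/\alpha_1}2^{q(s+2/\alpha_1)}$.

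For the low-frequency block $q=-1$ the semigroup gives no decay, so I put absolute values in Duhamel and apply H\"older in time on $[0,T]$: this produces a factor $T^{1/\alpha_1}$ on the initial-data term and $T^{1+1/\alpha_1-1/\alpha}$ on the source term. Combined with the bounded (no-$T$) contributions from $q\geq 0$, these assemble into the two prefactors $(1+T^{1/\alpha_1})$ and $(1+T^{1+1/\alpha_1-1/\alpha})$ in the statement. Multiplying each block estimate by the appropriate weight and taking the $\ell^r$ norm over $q\geq -1$ then gives the full Chemin--Lerner bound.

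Existence of $u$ in the stated space follows by defining it through Duhamel and checking, via the a priori bound just obtained, that the partial sums $\sum_{-1\leq q\leq Q}u_q$ are Cauchy in the target Chemin--Lerner space; uniqueness is immediate from linearity and the same bound applied to the difference of two solutions. When $r<\infty$, the density of truncated Fourier blocks together with the strong continuity of $t\mapsto e^{t\mu\Delta}$ on each block and dominated convergence in the $\ell^r$ sum give $u\in\mathcal{C}([0,T];B^s_{p,r})$. The hard part will be the bookkeeping of the two steps above: correctly balancing the $q=-1$ time-factors against the clean exponential-decay bound for $q\geq 0$ so that precisely the two announced $T$-prefactors emerge, and pinning down in the Young step that the source term really does pick up the factor $\mu^{1/\alpha-1}$ appearing on the right-hand side.
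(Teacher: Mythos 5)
Your argument is correct and is precisely the standard proof (dyadic localization, Duhamel, exponential decay of the localized heat semigroup on each annulus, Young's inequality in time, separate treatment of the block $q=-1$) that the paper does not reproduce but imports from Danchin and Bahouri--Chemin--Danchin. Note only that the exponent in the displayed estimate should read $B^{s+2/\alpha_{1}}_{p,r}$ rather than $B^{s+2/\alpha}_{p,r}$ --- as your own bookkeeping with the weight $2^{q(s+2/\alpha_{1})}$ confirms --- and that the $T$-prefactors produced by the $q=-1$ block actually carry powers of $\mu T$ once the overall factor $\mu^{1/\alpha_{1}}$ is taken into account.
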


\section{Well-posedness for $\kappa\neq0$ }
\setcounter{equation}{0} In this section, using the
frequency-localization methods, we give the proof of main result.

\noindent\textit{\underline{The proof of Theorem \ref{thm1.1}.}} The
coefficients $\tau_{p},\tau_{w},\lambda$ are assumed to be one. For
classical solutions, (\ref{R-E1}) can be changed into the following
system in $(n,\textbf{u},\mathcal{T},\mathit\Phi)$:
\begin{equation}
\left\{
\begin{array}{l}\partial_{t}n+\mathrm{div}(n\textbf{u})=0,\\
n\partial_{t}\textbf{u}+n(\textbf{u}\cdot\nabla)\textbf{u}+\nabla
(n\mathcal{T})
 =n\nabla\mathit\Phi-n\textbf{u},
 \\
n\partial_{t}\mathcal{T}+n\textbf{u}\cdot\nabla
\mathcal{T}+(\gamma-1)n\mathcal{T}\mathrm{div}
\textbf{u}=(\gamma-1)\kappa\Delta \mathcal{T}
+\frac{\gamma-1}{2}n|\textbf{u}|^{2}-n(\mathcal{T}-\mathcal{T}_{L}),\\
\Delta\mathit\Phi=n-\overline{n},
 \end{array} \right.\label{R-E4}
\end{equation}
In order to obtain the effective frequency-localization estimate on
$(n,\textbf{u})$, we introduce a function change
$$\left(%
\begin{array}{c}
  \rho\\
  \textbf{u}\\
 \theta \\
  \textbf{E}\\
\end{array}%
\right)=\left(%
\begin{array}{c}
  \ln n-\ln \overline{n} \\
  \textbf{u} \\
  \mathcal{T}-\mathcal{T}_{L} \\
  \nabla\mathit\Phi  \\
\end{array}%
\right).
$$
Then the new variable $(\rho,\textbf{u},\theta,\textbf{E})$
satisfies
\begin{equation}
\left\{
\begin{array}{l}\partial_{t}\rho+\textbf{u}\cdot\nabla \rho+\mathrm{div}\textbf{u}=0,\\
\partial_{t}\textbf{u}+\mathcal{T}_{L}\nabla \rho+(\textbf{u}\cdot\nabla)\textbf{u}+\nabla \theta
+\theta\nabla \rho=\textbf{E}-\textbf{u}, \\
\partial_{t}\theta-\frac{(\gamma-1)\kappa}{\bar{n}}\Delta\theta+\textbf{u}\cdot\nabla
\theta=h_{1}(\rho)\Delta\theta-(\gamma-1)(\mathcal{T}_{L}+\theta)
\mathrm{div}\textbf{u}
+\frac{\gamma-1}{2}|\textbf{u}|^{2}-\theta,\\
\partial_{t}\textbf{E}=-\nabla\Delta^{-1}\mathrm{div}(h_{2}(\rho)\textbf{u}+\bar{n}\textbf{u}),
 \end{array} \right.\label{R-E5}
\end{equation}
where $h_{1}(\rho), h_{2}(\rho) $ defined by
$$h_{1}(\rho)=\frac{(\gamma-1)\kappa}{\bar{n}}\Big(1-\exp(-\rho)\Big)\ \mbox{and} \ \ h_{2}(\rho)=\bar{n}(\exp(\rho)-1)$$
are two smooth functions on the interval $(-\infty,\infty)$. The
non-local term $\nabla\Delta^{-1}\nabla \cdot f$ is the product of
Riesz transforms on $f$. Here and below, we set
$\tilde{\kappa}=\frac{(\gamma-1)\kappa}{\bar{n}}$ for simplicity.

The initial data (\ref{R-E2}) become
\begin{equation}(\rho,\textbf{u},\theta,\textbf{E})(x,0)
=(\ln n_{0}-\ln
\bar{n},\textbf{u}_{0},\mathcal{T}_{0}-\mathcal{T}_{L},\nabla\Delta^{-1}(n_{0}-\bar{n})),\
x\in \mathbb{R}^{N}. \label{R-E6}\end{equation}

\begin{rem}\label{rem3.1}
The variable transform is from the open set
$\{(n,\textbf{u},\mathcal{T},\textbf{E})\in (0,+\infty)\times
\mathbb{R}^{N}\times\mathbb{R}^{N}\times \mathbf{R}^{N}\}$ to the
whole space $\{(\rho,\textbf{u},\theta,\textbf{E})\in
\mathbb{R}\times \mathbb{R}^{N}\times\mathbb{R}\times
\mathbb{R}^{N}\}$. It is easy to show that for classical solutions
$(n,\textbf{u},\mathcal{T},\textbf{E})$ away from vacuum,
(\ref{R-E1})-(\ref{R-E2}) is equivalent to
(\ref{R-E5})-(\ref{R-E6}).
\end{rem}

The proof of the local well-posedness stems from a standard
iterative process. First of all, we consider the linear coupled
system of hyperbolic-parabolic form
\begin{equation}
\left\{
\begin{array}{l}\partial_{t}\overline{\rho}+\textbf{v}\cdot\nabla
\overline{\rho}+\mathrm{div}\overline{\textbf{u}}=0,\\[1mm]
\partial_{t}\overline{\textbf{u}}+\mathcal{T}_{L}\nabla \overline{\rho}+(\textbf{v}\cdot\nabla)\overline{\textbf{u}}=f, \\[1mm]
\partial_{t}\overline{\theta}-\tilde{\kappa}\Delta\overline{\theta}=g,\\[1mm]
\partial_{t}\overline{\textbf{E}}=-\nabla\Delta^{-1}\mathrm{div}h,
 \end{array} \right.\label{R-E1002}
\end{equation}
subject to the initial data
\begin{eqnarray}
(\overline{\rho},\overline{\textbf{u}},\overline{\theta},\overline{\textbf{E}})|_{t=0}=(\overline{\rho}_{0},\overline{\mathbf{u}}_{0},\overline{\theta}_{0},\overline{\mathbf{E}}_{0}),
\label{R-E1003}
\end{eqnarray}
where $\textbf{v},f,h:\mathbb{R}^{+}\times \mathbb{R}^{N}\rightarrow
\mathbb{R}^{N}$ and $g:\mathbb{R}^{+}\times
\mathbb{R}^{N}\rightarrow \mathbb{R}$.

For the system (\ref{R-E1002})-(\ref{R-E1003}), we have the
following conclusion.
\begin{prop}\label{prop3.1}
Let $p\in[1,+\infty],\ r\in[1,+\infty),\ s_{1}>0$  and
$s_{2}\in\mathbb{R}$. Suppose that
$(\overline{\rho}_{0},\overline{\mathbf{u}}_{0},\overline{\mathbf{E}}_{0})\in
B^{s_{1}}_{2,r},\ \overline{\theta}_{0}\in B^{s_{2}}_{p,r}$, $f,h\in
\mathcal{C}([0,T], B^{s_{1}}_{2,r}),\ g\in L^1(0,T;B^{s_{2}}_{p,r})$
and
\begin{eqnarray*}
&&\nabla\mathbf{v}\in \left\{
\begin{array}{l}
 \mathcal{C}([0,T], B^{s_{1}-1}_{2,r})\ \ \mbox{if}\ \ s_{1}>1+N/2, \mbox{or}\ s_{1}=1+N/2 \ \mbox{and}\ r=1;\\
 \mathcal{C}([0,T], B^{\frac{N}{2}+\varepsilon}_{2,\infty})\ \ \mbox{for some}\ \ \varepsilon>0\ \ \mbox{if}\ \ s_{1}=1+N/2 \ \mbox{and}\ r>1;\\
 \mathcal{C}([0,T], B^{\frac{N}{2}}_{2,\infty}\cap L^{\infty}) \ \ \mbox{if}\ \ 0<s_{1}<1+N/2;\\
\end{array} \right.
\end{eqnarray*}
for any given $T>0$. Then the system (\ref{R-E1002})-(\ref{R-E1003})
has a unique solution
$(\overline{\rho},\overline{\mathbf{u}},\overline{\theta},\overline{\mathbf{E}})$
satisfying
$$(\overline{\rho},\overline{\mathbf{u}},\overline{\mathbf{E}})\in\widetilde{\mathcal{C}}_{T}(B^{s_{1}}_{2,r})\ \ \mbox{and} \ \ \overline{\theta}\in\widetilde{\mathcal{C}}_{T}(B^{s_{2}}_{p,r}).$$
\end{prop}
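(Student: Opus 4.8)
The plan is to decouple the system (\ref{R-E1002}) into its three constituent blocks, which are only coupled through the prescribed data $\mathbf{v},f,g,h$, and treat each by the corresponding linear theory. The equation for $\overline{\mathbf{E}}$ is trivial: since $\nabla\Delta^{-1}\mathrm{div}$ is a matrix of Riesz transforms, it is a bounded Fourier multiplier of order zero on every $B^{s_1}_{2,r}$, so $\overline{\mathbf{E}}(t)=\overline{\mathbf{E}}_0-\int_0^t \nabla\Delta^{-1}\mathrm{div}\,h(\tau)\,d\tau$ is well-defined and lies in $\widetilde{\mathcal{C}}_T(B^{s_1}_{2,r})$ with an obvious bound. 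The heat equation for $\overline{\theta}$ is handled by Proposition \ref{prop2.6} applied with $\mu=\tilde\kappa$, $\alpha=1$, $s=s_2$: since $g\in L^1(0,T;B^{s_2}_{p,r})=\widetilde{L}^1_T(B^{s_2}_{p,r})$, we obtain a unique solution in $\widetilde{L}^\infty_T(B^{s_2}_{p,r})$, and since $r<\infty$ the proposition also gives $\overline{\theta}\in\mathcal{C}([0,T];B^{s_2}_{p,r})$, hence $\overline{\theta}\in\widetilde{\mathcal{C}}_T(B^{s_2}_{p,r})$.

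The substantial part is the first two equations, a linear symmetric hyperbolic system in $(\overline{\rho},\overline{\mathbf{u}})$ with transport coefficient $\mathbf{v}$ and a constant-coefficient first-order coupling through $\mathcal{T}_L\nabla\overline{\rho}$ and $\mathrm{div}\,\overline{\mathbf{u}}$. I would first symmetrize: multiplying the $\overline{\rho}$-equation by $\mathcal{T}_L$, the pair
\begin{equation*}
\partial_t\begin{pmatrix}\mathcal{T}_L\overline{\rho}\\ \overline{\mathbf{u}}\end{pmatrix}+\sum_j \mathbf{v}_j\partial_j\begin{pmatrix}\mathcal{T}_L\overline{\rho}\\ \overline{\mathbf{u}}\end{pmatrix}+\mathcal{T}_L\begin{pmatrix}\mathrm{div}\,\overline{\mathbf{u}}\\ \nabla\overline{\rho}\end{pmatrix}=\begin{pmatrix}0\\ f\end{pmatrix}
\end{equation*}
has a symmetric structure (the off-diagonal first-order part $\mathcal{T}_L(\mathrm{div},\nabla)$ is skew-symmetric in $L^2$, and the transport part is symmetric since $\mathbf{v}$ is real). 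The existence of a solution follows from a Friedrichs-type regularization or from the Littlewood-Paley/hyperbolic energy method: apply $\Delta_q$ to the system, commute $\Delta_q$ past $\mathbf{v}\cdot\nabla$ producing the commutator $[\mathbf{v}\cdot\nabla,\Delta_q]$, and run an $L^2$ energy estimate on each dyadic block. The commutator is estimated by Lemma \ref{lem2.3} (with $\mathcal{A}=\mathrm{div}$ or $\nabla$, $\rho=\infty$), which is exactly why the three regularity alternatives for $\nabla\mathbf{v}$ appear: they correspond to the three cases $s_1=1+N/2$, $s_1>1+N/2$ (absorbed via embedding), and $0<s_1<1+N/2$. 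After multiplying the block estimate by $2^{qs_1}$, taking $\ell^r$, and invoking Gronwall in time against $\int_0^t\|\nabla\mathbf{v}\|\,d\tau$, one obtains the a priori bound
\begin{equation*}
\|(\overline{\rho},\overline{\mathbf{u}})\|_{\widetilde{L}^\infty_T(B^{s_1}_{2,r})}\le C\,e^{C\int_0^T\|\nabla\mathbf{v}\|\,d\tau}\Big(\|(\overline{\rho}_0,\overline{\mathbf{u}}_0)\|_{B^{s_1}_{2,r}}+\int_0^T\|f\|_{B^{s_1}_{2,r}}\,d\tau\Big).
\end{equation*}

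For existence itself I would mollify the data and the coefficient $\mathbf{v}$ (or use a standard iteration/Friedrichs scheme), obtain solutions of the regularized problems satisfying the above bound uniformly, and pass to the limit; uniqueness follows from the same energy estimate applied to the difference of two solutions, which satisfies the same system with zero data (here one only needs an estimate at a lower regularity level, so no loss occurs). Time continuity $\overline{\rho},\overline{\mathbf{u}}\in\mathcal{C}([0,T];B^{s_1}_{2,r})$ is obtained in the usual way: the equation gives $\partial_t(\overline{\rho},\overline{\mathbf{u}})\in\widetilde{L}^\infty_T(B^{s_1-1}_{2,r})$, hence weak continuity with values in $B^{s_1}_{2,r}$, and then strong continuity from the fact that the norm is continuous (using $r<\infty$ and the dominated convergence over the dyadic sum, combined with continuity of each $\|\Delta_q(\overline\rho,\overline{\mathbf u})(t)\|_{L^2}$). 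Together with $\widetilde{L}^\infty_T$ membership this yields $\widetilde{\mathcal{C}}_T(B^{s_1}_{2,r})$. The main obstacle is the commutator estimate in the borderline case $s_1=1+N/2$ with $r>1$: there Lemma \ref{lem2.3} is not directly available at the critical index, which is precisely why the hypothesis on $\nabla\mathbf{v}$ is strengthened to $B^{N/2+\varepsilon}_{2,\infty}$ — with that extra room the commutator term is controlled, and the rest of the argument goes through unchanged.
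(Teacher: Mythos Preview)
Your decomposition into three blocks and your treatment of each is exactly the approach the paper takes; its proof is in fact even terser, simply observing that the result follows from the $L^2$-boundedness of the Riesz transform (for $\overline{\mathbf{E}}$), Proposition~\ref{prop2.6} (for $\overline{\theta}$), and the linear hyperbolic theory in Besov spaces recorded as Theorem~4.15 in \cite{BCD} (for $(\overline{\rho},\overline{\mathbf{u}})$). Your energy/commutator argument is precisely the content of that cited theorem, so the two proofs coincide, yours being the spelled-out version.
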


\begin{proof}
Note that the $L^2$- boundedness of Riesz transform, Proposition
\ref{prop3.1} is the direct consequence of Proposition \ref{prop2.6}
and Theorem 4.15 in the recent book \cite{BCD}.
\end{proof}

In what follows, the proof of Theorem \ref{thm1.1} is divided into
several steps, since it is a bit longer.\\

\textbf{Step1: approximate solutions}

We use a standard iterative process to build a solution. Starting
from $(\rho^0,\textbf{u}^0,\theta^0,\textbf{E}^0):=(0,0,0,0)$. Then
we define by induction a  solution sequence
$\{(\rho^m,\textbf{u}^m,\theta^m,\textbf{E}^m)\}_{m\in \mathbb{N}}$
by solving the following linear equations
\begin{equation}
\left\{
\begin{array}{l}\partial_{t}\rho^{m+1}+\textbf{u}^{m}\cdot\nabla
\rho^{m+1}+\mathrm{div}\textbf{u}^{m+1}=0,\\[1mm]
\partial_{t}\textbf{u}^{m+1}+\mathcal{T}_{L}\nabla \rho^{m+1}+(\textbf{u}^{m}\cdot\nabla)\textbf{u}^{m+1}=-\nabla \theta^{m}
-\theta^{m}\nabla \rho^{m}+\textbf{E}^{m}-\textbf{u}^{m}, \\[1mm]
\partial_{t}\theta^{m+1}-\tilde{\kappa}\Delta\theta^{m+1}\\ \hspace{10mm}=-\textbf{u}^{m}\cdot\nabla
\theta^{m}+h_{1}(\rho^{m})\Delta\theta^{m}-(\gamma-1)(\mathcal{T}_{L}+\theta^{m})
\mathrm{div}\textbf{u}^{m}
+\frac{\gamma-1}{2}|\textbf{u}^{m}|^{2}-\theta^{m},\\[1mm]
\partial_{t}\textbf{E}^{m+1}=-\nabla\Delta^{-1}\mathrm{div}\{h_{2}(\rho^m)\textbf{u}^{m}+\bar{n}\textbf{u}^{m}\},
 \end{array} \right.\label{R-E7}
\end{equation}
with the initial data
\begin{equation}
(\rho^{m+1},\textbf{u}^{m+1},\theta^{m+1},\textbf{E}^{m+1})(x,0)=(S_{m+1}\rho_{0},S_{m+1}\textbf{u}_{0},S_{m+1}\theta_{0},S_{m+1}\textbf{E}_{0}),\
x\in \mathbb{R}^{N}. \label{R-E8}\end{equation}

Since all the data belong to $B^{\infty}_{2,r}$, Proposition
\ref{prop3.1} enable us to show by induction that the above Cauchy
problem has a global solution which belongs to
$\widetilde{\mathcal{C}}(B^{\infty}_{2,r})$.\\

\textbf{Step2: uniform bounds}

Set
$$E_{T}^{\sigma}:=\widetilde{\mathcal{C}}_{T}(B^{\sigma}_{2,1})\times\Big(\widetilde{\mathcal{C}}_{T}(B^{\sigma}_{2,1})\Big)^{N}
\times\widetilde{\mathcal{C}}_{T}(B^{\sigma+1}_{2,1})\times\Big(\widetilde{\mathcal{C}}_{T}(B^{\sigma}_{2,1}))\Big)^{N}$$
for $T>0$. We hope to find a time $T$ such that the approximate
solution
$\{(\rho^{m},\textbf{u}^{m},\theta^{m},\textbf{E}^{m})\}_{m\in
\mathbb{N}}$ is uniformly bounded in $E_{T}^{\sigma}$.

First, by applying the operator $\Delta_{q}(q\geq-1)$ to the first
two equations of (\ref{R-E7}), we infer that for
$(\Delta_{q}\rho^{m+1},\Delta_{q}\textbf{u}^{m+1})$
\begin{equation}
\left\{
\begin{array}{l}\partial_{t}\Delta_{q}\rho^{m+1}+(\textbf{u}^{m}\cdot\nabla)\Delta_{q}\rho^{m+1}+\Delta_{q}\mathrm{div}\textbf{u}^{m+1}=[\textbf{u}^{m},\Delta_{q}]\cdot\nabla\rho^{m+1},\\[2mm]
\partial_{t}\Delta_{q}\textbf{u}^{m+1}+\mathcal{T}_{L}\Delta_{q}\nabla \rho^{m+1}+(\textbf{u}^{m}\cdot\nabla)\Delta_{q}\textbf{u}^{m+1}\\
\hspace{5mm}=-\Delta_{q}\nabla \theta^{m}
+[\textbf{u}^{m},\Delta_{q}]\cdot\nabla\textbf{u}^{m+1}-\nabla
\rho^{m}\Delta_{q}\theta^{m}+[\nabla
\rho^{m},\Delta_{q}]\theta^{m}+\Delta_{q}\textbf{E}^{m}-\Delta_{q}\textbf{u}^{m},
\end{array} \right.\label{R-E9}
\end{equation}
where the commutator $[\cdot,\cdot]$ is defined by $[f,g]=fg-gf$.

Then multiplying the first equation of Eqs. (\ref{R-E9}) by
$\mathcal{T}_{L}\Delta_{q}\rho^{m+1}$, the second one by
$\Delta_{q}\textbf{u}^{m+1}$, and adding the resulting equations
together, after integrating it over $\mathbb{R}^{N}$, we have
\begin{eqnarray}&&\frac{1}{2}\frac{d}{dt}\Big(\mathcal{T}_{L}\|\Delta_{q}\rho^{m+1}\|^2_{L^2}+\|\Delta_{q}\textbf{u}^{m+1}\|^2_{L^2}\Big)\nonumber
\\&=&\frac{1}{2}\int\mathrm{div}\textbf{u}^{m}(\mathcal{T}_{L}|\Delta_{q}\rho^{m+1}|^2+|\Delta_{q}\textbf{u}^{m+1}|^2)+\int\mathcal{T}_{L}[\textbf{u}^{m},\Delta_{q}]\cdot\nabla
\rho^{m+1}\Delta_{q}\rho^{m+1}\nonumber\\&&-\int\Delta_{q}\nabla\theta^{m}\cdot\Delta_{q}\textbf{u}^{m+1}
+\int[\textbf{u}^{m},\Delta_{q}]\cdot\nabla\textbf{u}^{m+1}\Delta_{q}\textbf{u}^{m+1}-\int\nabla\rho^{m}\cdot\Delta_{q}\textbf{u}^{m+1}\Delta_{q}\theta^{m}
\nonumber\\&&+\int[\nabla
\rho^{m},\Delta_{q}]\theta^{m}\cdot\Delta_{q}\textbf{u}^{m+1}+\int\Delta_{q}\textbf{E}^{m}\cdot\Delta_{q}\textbf{u}^{m+1}
-\int\Delta_{q}\textbf{u}^{m}\cdot\Delta_{q}\textbf{u}^{m+1}\nonumber
\\&\leq&
\frac{1}{2}\|\nabla\textbf{u}^{m}\|_{L^\infty}(\|\Delta_{q}\rho^{m+1}\|^2_{L^2}+\|\Delta_{q}\textbf{u}^{m+1}\|^2_{L^2})+\mathcal{T}_{L}\|[\textbf{u}^{m},\Delta_{q}]\cdot\nabla
\rho^{m+1}\|_{L^2}\|\Delta_{q}\rho^{m+1}\|_{L^2}\nonumber\\&&+\|\Delta_{q}\nabla\theta^{m}\|_{L^2}\|\Delta_{q}\textbf{u}^{m+1}\|_{L^2}+
\|[\textbf{u}^{m},\Delta_{q}]\cdot\nabla
\textbf{u}^{m+1}\|_{L^2}\|\Delta_{q}\textbf{u}^{m+1}\|_{L^2}\nonumber\\&&+\|\nabla\rho^{m}\|_{L^\infty}\|\Delta_{q}\theta^{m}\|_{L^2}\|\Delta_{q}\textbf{u}^{m+1}\|_{L^2}
+\|[\nabla
\rho^{m},\Delta_{q}]\theta^{m}\|_{L^2}\|\Delta_{q}\textbf{u}^{m+1}\|_{L^2}\nonumber\\&&+\|\Delta_{q}\textbf{E}^{m}\|_{L^2}\|\Delta_{q}\textbf{u}^{m+1}\|_{L^2}
+\|\Delta_{q}\textbf{u}^{m}\|_{L^2}\|\Delta_{q}\textbf{u}^{m+1}\|_{L^2},
\label{R-E10}\end{eqnarray} where we have used Cauchy-Schwartz's
inequality.

Dividing (\ref{R-E10}) by
$\Big(\mathcal{T}_{L}\|\Delta_{q}\rho^{m+1}\|^2_{L^2}+\|\Delta_{q}\textbf{u}^{m+1}\|^2_{L^2}+\varepsilon\Big)^{\frac{1}{2}}$
($\varepsilon>0$ is a small quantity), we get
\begin{eqnarray}&&\frac{d}{dt}\Big(\mathcal{T}_{L}\|\Delta_{q}\rho^{m+1}\|^2_{L^2}+\|\Delta_{q}\textbf{u}^{m+1}\|^2_{L^2}+\varepsilon\Big)^{\frac{1}{2}}\nonumber
\\&\leq&
C\|\nabla\textbf{u}^{m}\|_{L^\infty}(\|\Delta_{q}\rho^{m+1}\|_{L^2}+\|\Delta_{q}\textbf{u}^{m+1}\|_{L^2})+C\|[\textbf{u}^{m},\Delta_{q}]\cdot\nabla
\rho^{m+1}\|_{L^2}\nonumber\\&&+C\|\Delta_{q}\nabla\theta^{m}\|_{L^2}+C
\|[\textbf{u}^{m},\Delta_{q}]\cdot\nabla
\textbf{u}^{m+1}\|_{L^2}+C\|\nabla\rho^{m}\|_{L^\infty}\|\Delta_{q}\theta^{m}\|_{L^2}
\nonumber\\&&+C\|[\nabla
\rho^{m},\Delta_{q}]\theta^{m}\|_{L^2}+C\|\Delta_{q}\textbf{E}^{m}\|_{L^2}
+C\|\Delta_{q}\textbf{u}^{m}\|_{L^2}, \label{R-E11}\end{eqnarray}
where $C>0$ here and below denotes a uniform constant independent of
$m$. Integrating (\ref{R-E11}) with respect to the variable
$t\in[0,T]$, then taking $\varepsilon\rightarrow0$, we arrive at
\begin{eqnarray}&&\|\Delta_{q}\rho^{m+1}(t)\|_{L^2}+\|\Delta_{q}\textbf{u}^{m+1}(t)\|_{L^2}\nonumber
\\&\leq&C(\|\Delta_{q}\rho^{m+1}_{0}\|_{L^2}+\|\Delta_{q}\textbf{u}^{m+1}_{0}\|_{L^2})+C\int^{t}_{0}
\|\nabla\textbf{u}^{m}(\tau)\|_{L^\infty}\Big(\|\Delta_{q}\rho^{m+1}(\tau)\|_{L^2}\nonumber\\&&+\|\Delta_{q}\textbf{u}^{m+1}(\tau)\|_{L^2}\Big)d\tau
+C\int^{t}_{0}\Big(\|[\textbf{u}^{m},\Delta_{q}]\cdot\nabla
\rho^{m+1}\|_{L^2}+\|[\textbf{u}^{m},\Delta_{q}]\cdot\nabla
\textbf{u}^{m+1}\|_{L^2}\Big)d\tau\nonumber\\&&+C\int^{t}_{0}\Big(\|\Delta_{q}\nabla\theta^{m}\|_{L^2}
+\|\nabla\rho^{m}\|_{L^\infty}\|\Delta_{q}\theta^{m}\|_{L^2}
+\|[\nabla
\rho^{m},\Delta_{q}]\theta^{m}\|_{L^2}\nonumber\\&&+\|\Delta_{q}\textbf{E}^{m}\|_{L^2}
+\|\Delta_{q}\textbf{u}^{m}\|_{L^2}\Big)d\tau.
\label{R-E12}\end{eqnarray} Multiply the factor
$2^{q\sigma}(\sigma=1+N/2)$ on both sides of (\ref{R-E12}) to obtain
\begin{eqnarray}&&2^{q\sigma}\|\Delta_{q}\rho^{m+1}(t)\|_{L^2}+2^{q\sigma}\|\Delta_{q}\textbf{u}^{m+1}(t)\|_{L^2}\nonumber
\\&\leq&C2^{q\sigma}(\|\Delta_{q}\rho^{m+1}_{0}\|_{L^2}+\|\Delta_{q}\textbf{u}^{m+1}_{0}\|_{L^2})+C\int^{t}_{0}
\|\textbf{u}^{m}(\tau)\|_{B^{\sigma}_{2,1}}2^{q\sigma}\Big(\|\Delta_{q}\rho^{m+1}(\tau)\|_{L^2}\nonumber\\&&+\|\Delta_{q}\textbf{u}^{m+1}(\tau)\|_{L^2}\Big)d\tau
+C\int^{t}_{0}c_{q}(\tau)\|\textbf{u}^{m}\|_{B^{\sigma}_{2,1}}\Big(\|\rho^{m+1}(\tau)\|_{B^{\sigma}_{2,1}}+\|
\textbf{u}^{m+1}(\tau)\|_{B^{\sigma}_{2,1}}\Big)d\tau\nonumber\\&&+C\int^{t}_{0}c_{q}(\tau)\|\rho^{m}\|_{B^{\sigma}_{2,1}}\|\theta^{m}\|_{B^{\sigma}_{2,1}}d\tau
+C\int^{t}_{0}2^{q\sigma}\Big(\|\Delta_{q}\nabla\theta^{m}\|_{L^2}
\nonumber\\&&+\|\nabla\rho^{m}\|_{L^\infty}\|\Delta_{q}\theta^{m}\|_{L^2}
+\|\Delta_{q}\textbf{E}^{m}\|_{L^2}
+\|\Delta_{q}\textbf{u}^{m}\|_{L^2}\Big)d\tau
\label{R-E13}\end{eqnarray} where we used Remark \ref{rem2.1} and
Lemmas \ref{lem2.2}-\ref{lem2.3} and $\{c_{q}\}$ denotes some
sequence which satisfies $\|(c_{q})\|_{ {l^{1}}}\leq 1$ although
each $\{c_{q}\}$ is possibly different in (\ref{R-E13}).

Summing up (\ref{R-E13}) on $q\geq-1$ implies
\begin{eqnarray}&&\|(\rho^{m+1},\textbf{u}^{m+1})\|_{\widetilde{L}^{\infty}_{T}(B^{\sigma}_{2,1})}\nonumber
\\&\leq&C(\|(\rho^{m+1}_{0},\textbf{u}^{m+1}_{0})\|_{B^{\sigma}_{2,1}}+C\int^{T}_{0}
\|\textbf{u}^{m}(t)\|_{B^{\sigma}_{2,1}}\|(\rho^{m+1},\textbf{u}^{m+1})(t)\|_{\widetilde{L}^{\infty}_{t}(B^{\sigma}_{2,1})}dt\nonumber\\&&
\nonumber\\&&+C\int^{T}_{0}\Big((1+\|\rho^{m}\|_{B^{\sigma}_{2,1}})\|\theta^{m}\|_{B^{\sigma+1}_{2,1}}+
\|(\textbf{u}^{m},\textbf{E}^{m})\|_{B^{\sigma}_{2,1}}\Big)dt.
\label{R-E14}\end{eqnarray} Then it follows from Gronwall's
inequality that
\begin{eqnarray}&&\|(\rho^{m+1},\textbf{u}^{m+1})\|_{\widetilde{L}^{\infty}_{T}(B^{\sigma}_{2,1})}\nonumber
\\&\leq&Ce^{CZ^{m}(T)}\Big\{\|(\rho_{0},\textbf{u}_{0})\|_{B^{\sigma}_{2,1}}\nonumber\\&&+\int^{T}_{0}e^{-CZ^{m}(t)}
\Big((1+\|\rho^{m}(t)\|_{B^{\sigma}_{2,1}})\|\theta^{m}(t)\|_{B^{\sigma+1}_{2,1}}+
\|(\textbf{u}^{m},\textbf{E}^{m})(t)\|_{B^{\sigma}_{2,1}}\Big)dt\Big\},
\label{R-E15}\end{eqnarray} with
$Z^{m}(T):=\int^T_{0}\|\textbf{u}^{m}(t)\|_{B^{\sigma}_{2,1}}dt.$

On the other hand, by the last equation of (\ref{R-E7}), with the
aid of Proposition \ref{prop2.5}, we can obtain
\begin{eqnarray}
&&\|\textbf{E}^{m+1}\|_{\widetilde{L}^{\infty}_{T}(B^{\sigma}_{2,1})}\nonumber\\&\leq&
C\Big(\|\textbf{E}^{m+1}_{0}\|_{B^{\sigma}_{2,1}}+\int^{T}_{0}\|\nabla\Delta^{-1}\mathrm{div}(h_{2}(\rho^m)\textbf{u}^m+\bar{n}\textbf{u}^m)\|_{B^{\sigma}_{2,1}}dt\Big)
\nonumber\\&\leq&C\Big(\|\textbf{E}_{0}\|_{B^{\sigma}_{2,1}}+\int^{T}_{0}(1+\|\rho^m\|_{B^{\sigma}_{2,1}})\|\textbf{u}^m\|_{B^{\sigma}_{2,1}}dt\Big),
\label{R-E16}\end{eqnarray} where we have used the $L^2$-boundedness
of nonlocal (but zero order) operator
$\nabla\Delta^{-1}\mathrm{div}$.

Taking $\alpha_{1}=\alpha=\infty,\ s=\sigma+1, \ p=2$ and $r=1$ in
Proposition \ref{prop2.6}, and applying the resulting inequality to
the third equation of (\ref{R-E7}), we have
\begin{eqnarray}
\|\theta^{m+1}\|_{\widetilde{L}^\infty_{T}(B^{\sigma+1}_{2,1})}\leq
C\Big(\|\theta^{m+1}_{0}\|_{B^{\sigma+1}_{2,1}}+(1+T)\tilde{\kappa}^{-1}\|F_{1}^{m}\|_{\widetilde{L}^\infty_{T}(B^{\sigma-1}_{2,1})}\Big),\label{R-E17}
\end{eqnarray}
where $F_{1}^{m}:=-\textbf{u}^{m}\cdot\nabla
\theta^{m}+h_{1}(\rho^{m})\Delta\theta^{m}-(\gamma-1)(\mathcal{T}_{L}+\theta^{m})
\mathrm{div}\textbf{u}^{m}
+\frac{\gamma-1}{2}|\textbf{u}^{m}|^{2}-\theta^{m}.$ From
Propositions \ref{prop2.4}-\ref{prop2.5} and Lemma \ref{lem2.2}, we
are led to
\begin{eqnarray}
&&\|\theta^{m+1}\|_{\widetilde{L}^\infty_{T}(B^{\sigma+1}_{2,1})}\nonumber\\&\leq&
C\Big\{\|\theta_{0}\|_{B^{\sigma+1}_{2,1}}+(1+T)\tilde{\kappa}^{-1}\Big((1+\|(\rho^{m},\textbf{u}^{m})\|_{\widetilde{L}^\infty_{T}(B^{\sigma}_{2,1})})\|\theta^{m}\|_{\widetilde{L}^\infty_{T}(B^{\sigma+1}_{2,1})}
\nonumber\\&&+(1+\|\textbf{u}^{m}\|_{\widetilde{L}^\infty_{T}(B^{\sigma}_{2,1})})\|\textbf{u}^{m}\|_{\widetilde{L}^\infty_{T}(B^{\sigma}_{2,1})}\Big)\Big\}.\label{R-E18}
\end{eqnarray}
Note that although the above constant $C$ maybe depend on $N$, it is
nothing to do with $m$, so we obtain the following uniform estimates. \\

\begin{lem}
 There exists a time $T_{1}>0$
(independent of $m$) such that
\begin{eqnarray}\|(\rho^{m},\mathbf{u}^{m},\mathbf{E}^{m})\|_{\widetilde{L}^\infty_{T_{1}}({B^{\sigma}_{2,1}})}+\|\theta^{m}\|_{\widetilde{L}^\infty_{T_{1}}(B^{\sigma+1}_{2,1})}\leq C_{1}A, \label{R-E19}\end{eqnarray}
for all $m\in\mathbb{N}\cup\{0\}$, provided that $\tilde{\kappa}>0$
is sufficiently large, where the constant $C_{1}>0$ independent of
$m$ and
$A:=\|(\rho_{0},\mathbf{u}_{0},\mathbf{E}_{0})\|_{B^{\sigma}_{2,1}}+\|\theta_{0}\|_{B^{\sigma+1}_{2,1}}$.
\end{lem}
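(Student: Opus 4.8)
The plan is to establish the uniform bound \eqref{R-E19} by a continuity (bootstrap) argument based on the \emph{a priori} inequalities \eqref{R-E15}, \eqref{R-E16} and \eqref{R-E18}. First I would set up the induction hypothesis: fix a large constant $C_1>0$ (to be specified) and a small time $T_1>0$, and assume that for some index $m$ one has
\[
\|(\rho^{m},\mathbf{u}^{m},\mathbf{E}^{m})\|_{\widetilde{L}^\infty_{T_{1}}(B^{\sigma}_{2,1})}+\|\theta^{m}\|_{\widetilde{L}^\infty_{T_{1}}(B^{\sigma+1}_{2,1})}\leq C_{1}A.
\]
Since $(\rho^0,\mathbf{u}^0,\theta^0,\mathbf{E}^0)=(0,0,0,0)$ and the initial data are truncated by $S_{m+1}$, the bound holds trivially for $m=0$ (and the data terms are $\leq A$). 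The goal is to propagate it to $m+1$.

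The key steps, in order: \textbf{(1)} From the definition of $Z^{m}(T_{1})$ and the induction hypothesis, $Z^{m}(T_{1})\leq C_{1}A\,T_{1}$, so $e^{CZ^{m}(T_{1})}\leq e^{CC_{1}AT_{1}}\leq 2$ once $T_{1}$ is chosen small enough (depending on $C_1$ and $A$). \textbf{(2)} Plug this and the induction hypothesis into \eqref{R-E15}: the integral term is bounded by $T_{1}\bigl((1+C_{1}A)C_{1}A+2C_{1}A\bigr)$, hence
\[
\|(\rho^{m+1},\mathbf{u}^{m+1})\|_{\widetilde{L}^\infty_{T_{1}}(B^{\sigma}_{2,1})}\leq 2C\,A + C\,T_{1}\bigl(C_{1}A+C_{1}^{2}A^{2}\bigr).
\]
\textbf{(3)} Similarly, from \eqref{R-E16}, $\|\mathbf{E}^{m+1}\|_{\widetilde{L}^\infty_{T_{1}}(B^{\sigma}_{2,1})}\leq C A + C T_{1}(1+C_{1}A)C_{1}A$. \textbf{(4)} From \eqref{R-E18},
\[
\|\theta^{m+1}\|_{\widetilde{L}^\infty_{T_{1}}(B^{\sigma+1}_{2,1})}\leq C A + C(1+T_{1})\tilde{\kappa}^{-1}\bigl((1+C_{1}A)C_{1}A + (1+C_{1}A)C_{1}A\bigr).
\]
Adding \textbf{(2)}--\textbf{(4)}, I would first \emph{fix} $C_{1}:=6C$ (three times twice the ``pure data'' constant, say), so that the leading terms $2CA+CA+CA = 4CA \leq \tfrac{2}{3}C_{1}A$. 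It remains to absorb the remaining terms into the last third of $C_{1}A$. The $\theta$-contribution $C(1+T_{1})\tilde{\kappa}^{-1}(1+C_{1}A)C_{1}A$ is made $\leq \tfrac{1}{6}C_{1}A$ by choosing $\tilde{\kappa}$ \emph{large} (this is exactly the largeness of $\kappa$ announced in Remark~\ref{rem1.2}); note the factor $(1+T_{1})\leq 2$ is harmless. The $\rho,\mathbf{u},\mathbf{E}$-contributions all carry a factor $T_{1}$, so they are $\leq \tfrac{1}{6}C_{1}A$ by then taking $T_{1}$ small enough (depending on $C_{1}$, $A$). This closes the induction and yields \eqref{R-E19} with, e.g., $C_{1}=6C$.

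The main subtlety — not really an obstacle, but the point that requires care — is the \emph{order of quantifiers}: $C_{1}$ must be fixed \emph{before} $T_{1}$ and before the lower bound on $\tilde{\kappa}$, since the smallness of $T_{1}$ and the largeness of $\tilde{\kappa}$ both depend on $C_{1}$ and $A$, while $C_{1}$ depends only on the absolute constant $C$ (which in turn depends only on $\bar n,\mathcal{T}_L,N,\gamma$ through Propositions~\ref{prop2.4}--\ref{prop2.6} and Lemmas~\ref{lem2.2}--\ref{lem2.3}), but \emph{not} on $m$. One also has to check that $T_{1}$ and the threshold for $\tilde{\kappa}$ can be chosen once and for all, uniformly in $m$ — which is clear since all the constants appearing are $m$-independent and the induction hypothesis is the \emph{same} bound $C_{1}A$ at every step. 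Finally, since all iterates lie in $\widetilde{\mathcal{C}}(B^{\infty}_{2,r})$ by Step~1, every norm written above is finite, so the bootstrap manipulations are legitimate.
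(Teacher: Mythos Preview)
Your proposal is correct and follows essentially the same induction argument as the paper: assume the bound at step $m$, feed it into \eqref{R-E15}, \eqref{R-E16}, \eqref{R-E18}, and close by choosing $C_1$, then $T_1$ small and $\tilde{\kappa}$ large. The only cosmetic difference is that the paper takes $\tilde{\kappa}\geq (1+T_{1})/T_{1}$ so that the factor $(1+T_{1})\tilde{\kappa}^{-1}$ in \eqref{R-E18} is converted into an extra power of $T_{1}$ (and then requires $C_{1}>1+6C$), whereas you bound $(1+T_{1})\leq 2$ and absorb the $\theta$-remainder directly by largeness of $\tilde{\kappa}$; both choices close the induction in the same way.
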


\begin{proof}
Indeed, the claim follows from the standard induction. First, we see
that (\ref{R-E19}) holds for $m=0$. Suppose that (\ref{R-E19}) holds
for any $m>0$, we expect to prove it is also true for $m+1$.
Together with the assumption, by (\ref{R-E15})-(\ref{R-E16}) and
(\ref{R-E18}), we get
\begin{eqnarray}
\|(\rho^{m+1},\textbf{u}^{m+1})\|_{\widetilde{L}^{\infty}_{T}(B^{\sigma}_{2,1})}
\leq
C\Big[A+T\max\{2C_{1}A,2(C_{1}A)^2\}\Big]e^{CC_{1}AT},\label{R-E20}
\end{eqnarray}
\begin{eqnarray}
\|\textbf{E}^{m+1}\|_{\widetilde{L}^{\infty}_{T}(B^{\sigma}_{2,1})}\leq
C\Big[A+T\max\{2C_{1}A,2(C_{1}A)^2\}\Big], \label{R-E21}
\end{eqnarray}

\begin{eqnarray}
\|\theta^{m+1}\|_{\widetilde{L}^\infty_{T}(B^{\sigma+1}_{2,1})}&\leq&
C\Big[A+(1+T)\tilde{\kappa}^{-1}\max\{4C_{1}A,4(C_{1}A)^2\}\Big]\nonumber\\&\leq&
C\Big[A+T\max\{4C_{1}A,4(C_{1}A)^2\}\Big], \label{R-E22}
\end{eqnarray}
where we suffice to take $\tilde{\kappa}$ satisfying
$\tilde{\kappa}\geq\frac{(1+T)}{T}$\ ($T$ to be determined) in the
last step of the inequality (\ref{R-E22}). Combining with
(\ref{R-E20})-(\ref{R-E22}), we have
\begin{eqnarray}
&&\|(\rho^{m+1},\textbf{u}^{m+1},\textbf{E}^{m+1})\|_{\widetilde{L}^{\infty}_{T}(B^{\sigma}_{2,1})}
+\|\theta^{m+1}\|_{\widetilde{L}^\infty_{T}(B^{\sigma+1}_{2,1})}\nonumber\\&\leq&
3C\Big[A+T\max\{4C_{1}A,4(C_{1}A)^2\}\Big]e^{CC_{1}AT}.
\label{R-E23}
\end{eqnarray}
Furthermore, if we choose $T_{1}$ satisfying
$$0<T_{1}\leq\min\Big\{\frac{\ln(C_{1}-6C)}{CC_{1}A},T_{0}\Big\}(C_{1}>1+6C),$$
where $T_{0}$ is the root of algebra equation
$$e^{CC_{1}At}=\frac{1}{6C\max\{4,4C_{1}A\}t}\ ,$$
then
$\|(\rho^{m+1},\textbf{u}^{m+1},\textbf{E}^{m+1})\|_{\widetilde{L}^{\infty}_{T_{1}}(B^{\sigma}_{2,1})}
+\|\theta^{m+1}\|_{\widetilde{L}^\infty_{T_{1}}(B^{\sigma+1}_{2,1})}\leq
C_{1}A$ is followed, which concludes the proof of the assertion.
\end{proof}

That is, we find a time $T_{1}>0$ (independent of $m$) such that the
sequence $\{(\rho^{m},\textbf{u}^{m},\theta^{m},\\
\textbf{E}^{m})\}_{m\in \mathbb{N}}$ is uniformly bounded in
$E_{T_{1}}^{\sigma}$.\\

\textbf{Step3: convergence}

Next, it will be shown that
$\{(\rho^{m},\textbf{u}^{m},\theta^{m},\textbf{E}^{m})\}_{m\in
\mathbb{N}}$ is a Cauchy sequence in $E_{T_{1}}^{\sigma}$.

Define
$$\delta\rho^{m+1}=\rho^{m+p+1}-\rho^{m+1},\ \delta\textbf{u}^{m+1}=\textbf{u}^{m+p+1}-\textbf{u}^{m+1},$$$$ \delta\theta^{m+1}=\theta^{m+p+1}-\theta^{m+1},\ \delta\textbf{E}^{m+1}=\textbf{E}^{m+p+1}-\textbf{E}^{m+1},$$
for any $(m,p)\in \mathbb{N}^2$.

Take the difference between the equation (\ref{R-E7}) for the
$(m+p+1)$-th step and the $(m+1)$-th step to give
\begin{equation}
\left\{
\begin{array}{l}\partial_{t}\delta\rho^{m+1}+\textbf{u}^{m+p}\cdot\nabla
\delta\rho^{m+1}+\delta\textbf{u}^{m}\cdot\rho^{m+1}+\mathrm{div}\delta\textbf{u}^{m+1}=0,\\[2mm]
\partial_{t}\delta\textbf{u}^{m+1}+\mathcal{T}_{L}\nabla \delta\rho^{m+1}+(\textbf{u}^{m+p}\cdot\nabla)\delta\textbf{u}^{m+1}+(\delta\textbf{u}^{m}\cdot\nabla)\textbf{u}^{m+1}\\
\hspace{5mm}=-\nabla \delta\theta^{m} -\theta^{m}\delta\nabla
\rho^{m}-\nabla\rho^{m+p}\delta\theta^{m}+\delta\textbf{E}^{m}-\delta\textbf{u}^{m},
\\[2mm]
\partial_{t}\delta\theta^{m+1}-\tilde{\kappa}\Delta\delta\theta^{m+1}\\ \hspace{5mm}=-\delta\textbf{u}^{m}\cdot\nabla
\theta^{m+p}-\textbf{u}^{m}\nabla\delta\theta^{m}+[h_{1}(\rho^{m+p})-h_{1}(\rho^{m})]\Delta\theta^{m+p}\\
\hspace{10mm}+h_{1}(\rho^m)\Delta\delta\theta^m-(\gamma-1)(\mathcal{T}_{L}+\theta^{m})
\mathrm{div}\delta\textbf{u}^{m}\\
\hspace{10mm}-\delta\theta^{m}\mathrm{div}\textbf{u}^{m+p}
+\frac{\gamma-1}{2}(\textbf{u}^{m+p}+\textbf{u}^{m})\delta\textbf{u}^{m}-\delta\theta^{m},\\[2mm]
\partial_{t}\delta\textbf{E}^{m+1}=-\nabla\Delta^{-1}\mathrm{div}\{[h_{2}(\rho^{m+p})-h_{2}(\rho^{m})]\textbf{u}^{m+p}+h_{2}(\rho^{m})\delta\textbf{u}^{m}+\bar{n}\delta\textbf{u}^{m}\},
 \end{array} \right.\label{R-E24}
\end{equation}
subject to the initial data
\begin{equation}
(\delta\rho^{m+1},\delta\textbf{u}^{m+1},\delta\theta^{m+1},\delta\textbf{E}^{m+1})(x,0)=[S_{m+p+1}-S_{m+1}](\rho_{0},\textbf{u}_{0},\theta_{0},\textbf{E}_{0}),\
x\in \mathbb{R}^{N}. \label{R-E25}
\end{equation}
Applying the operator $\Delta_{q}(q\geq-1)$ to the first two
equations of (\ref{R-E24}) gives
\begin{equation}
\left\{
\begin{array}{l}\partial_{t}\Delta_{q}\delta\rho^{m+1}+(\textbf{u}^{m+p}\cdot\nabla)\Delta_{q}\delta\rho^{m+1}+\Delta_{q}\mathrm{div}\delta\textbf{u}^{m+1}\\
=[\textbf{u}^{m+p},\Delta_{q}]\cdot\nabla\delta\rho^{m+1}-\Delta_{q}(\delta\textbf{u}^{m}\cdot\nabla\rho^{m+1}),\\[2mm]
\partial_{t}\Delta_{q}\delta\textbf{u}^{m+1}+\mathcal{T}_{L}\Delta_{q}\nabla \delta\rho^{m+1}+(\textbf{u}^{m+p}\cdot\nabla)\Delta_{q}\delta\textbf{u}^{m+1}\\
=[\textbf{u}^{m+p},\Delta_{q}]\cdot\nabla\delta\textbf{u}^{m+1}-\Delta_{q}(\delta\textbf{u}^{m}\cdot\nabla\textbf{u}^{m+1})-\Delta_{q}\nabla
\delta\theta^{m}\\
\hspace{5mm}-\Delta_{q}(\delta\theta^m\nabla\rho^{m+p})-\nabla\delta\rho^{m}\Delta_{q}\theta^m+
[\nabla\delta\rho^{m},\Delta_{q}]\theta^{m}+\Delta_{q}\delta\textbf{E}^{m}-\Delta_{q}\delta\textbf{u}^{m},
\end{array} \right.\label{R-E26}
\end{equation}
By multiplying the first equation of Eqs. (\ref{R-E26}) by
$\mathcal{T}_{L}\Delta_{q}\delta\rho^{m+1}$, the second one by
$\Delta_{q}\delta\textbf{u}^{m+1}$, and adding the resulting
equations together, after integrating it over $\mathbb{R}^{N}$, we
have
\begin{eqnarray}
&&\frac{1}{2}\frac{d}{dt}\Big(\mathcal{T}_{L}\|\Delta_{q}\delta\rho^{m+1}\|^2_{L^2}+\|\Delta_{q}\delta\textbf{u}^{m+1}\|^2_{L^2}\Big)\nonumber\\&\leq&
\|\nabla\textbf{u}^{m+p}\|_{L^{\infty}}\Big(\mathcal{T}_{L}\|\Delta_{q}\delta\rho^{m+1}\|^2_{L^2}+\|\Delta_{q}\delta\textbf{u}^{m+1}\|^2_{L^2}\Big)\nonumber\\&&+
\Big(\mathcal{T}_{L}\|[\textbf{u}^{m+p},\Delta_{q}]\cdot\nabla\delta\rho^{m+1}\|_{L^2}+\|\Delta_{q}(\delta\textbf{u}^{m}\cdot\nabla\rho^{m+1})\|_{L^2}\Big)\|\Delta_{q}\delta\rho^{m+1}\|_{L^2}
\nonumber\\&&+\Big\{\|[\textbf{u}^{m+p},\Delta_{q}]\cdot\nabla\delta\textbf{u}^{m+1}\|_{L^2}+\|\Delta_{q}(\delta\textbf{u}^{m}\cdot\nabla\textbf{u}^{m+1})\|_{L^2}
+\|\Delta_{q}\nabla
\delta\theta^{m}\|_{L^2}\nonumber\\&&+\|\Delta_{q}(\delta\theta^m\nabla\rho^{m+p})\|_{L^2}+\|\delta\rho^{m}\|_{L^\infty}\Big(\|\Delta_{q}\nabla\theta^m\|_{L^2}+2^{q}\|\Delta_{q}\theta^m\|_{L^2}\Big)\nonumber\\&&+
\|[\nabla
\delta\rho^{m},\Delta_{q}]\theta^{m}\|_{L^2}+\|\Delta_{q}\delta\textbf{E}^{m}\|_{L^2}+\|\Delta_{q}\delta\textbf{u}^{m}\|_{L^2}\Big\}\|\Delta_{q}\delta\textbf{u}^{m+1}\|_{L^2},
\label{R-E27}
\end{eqnarray}where we have bounded the integration
\begin{eqnarray*}&&-\int\Delta_{q}\theta^m\nabla\delta\rho^{m}\cdot\Delta_{q}\delta\textbf{u}^{m+1}\nonumber\\
&=&\int\delta\rho^{m}\Delta_{q}\nabla\theta^m\cdot\Delta_{q}\delta\textbf{u}^{m+1}
+\delta\rho^{m}\Delta_{q}\theta^m\cdot\Delta_{q}\mathrm{div}\delta\textbf{u}^{m+1}\nonumber\\&\leq&\|\delta\rho^{m}\|_{L^\infty}\Big(\|\Delta_{q}\nabla\theta^m\|_{L^2}+2^{q}\|\Delta_{q}\theta^m\|_{L^2}\Big)\|\Delta_{q}\delta\textbf{u}^{m+1}\|_{L^2}.\end{eqnarray*}
Similar to the estimate of (\ref{R-E11}), we can obtain
\begin{eqnarray}
&&\frac{d}{dt}\Big(\mathcal{T}_{L}\|\Delta_{q}\delta\rho^{m+1}\|^2_{L^2}+\|\Delta_{q}\delta\textbf{u}^{m+1}\|^2_{L^2}+\varepsilon\Big)^{\frac{1}{2}}\nonumber\\&\leq&
\|\nabla\textbf{u}^{m+p}\|_{L^{\infty}}\Big(\mathcal{T}_{L}\|\Delta_{q}\delta\rho^{m+1}\|_{L^2}+\|\Delta_{q}\delta\textbf{u}^{m+1}\|_{L^2}\Big)+
\Big(\mathcal{T}_{L}\|[\textbf{u}^{m+p},\Delta_{q}]\cdot\nabla\delta\rho^{m+1}\|_{L^2}\nonumber\\&&+\|\Delta_{q}(\delta\textbf{u}^{m}\cdot\nabla\rho^{m+1})\|_{L^2}\Big)
+\Big\{\|[\textbf{u}^{m+p},\Delta_{q}]\cdot\nabla\delta\textbf{u}^{m+1}\|_{L^2}+\|\Delta_{q}(\delta\textbf{u}^{m}\cdot\nabla\textbf{u}^{m+1})\|_{L^2}
\nonumber\\&&+\|\Delta_{q}\nabla
\delta\theta^{m}\|_{L^2}+\|\Delta_{q}(\delta\theta^m\nabla\rho^{m+p})\|_{L^2}+\|\delta\rho^{m}\|_{L^\infty}\Big(\|\Delta_{q}\nabla\theta^m\|_{L^2}+2^{q}\|\Delta_{q}\theta^m\|_{L^2}\Big)\nonumber\\&&+
\|[\nabla
\delta\rho^{m},\Delta_{q}]\theta^{m}\|_{L^2}+\|\Delta_{q}\delta\textbf{E}^{m}\|_{L^2}+\|\Delta_{q}\delta\textbf{u}^{m}\|_{L^2}\Big\},
\label{R-E28}\end{eqnarray} where $\varepsilon>0$ is a small
quantity.

Integrating (\ref{R-E28}) with respect to the variable
$t\in[0,T_{1}]$, then taking $\varepsilon\rightarrow0$, we arrive at
\begin{eqnarray}
&&\|\Delta_{q}\delta\rho^{m+1}(t)\|_{L^2}+\|\Delta_{q}\delta\textbf{u}^{m+1}(t)\|_{L^2}\nonumber\\&\leq&C\Big(\|\Delta_{q}\delta\rho^{m+1}_{0}\|_{L^2}+\|\Delta_{q}\delta\textbf{u}^{m+1}_{0}\|_{L^2}\Big)
+C\int^t_{0}\|\nabla\textbf{u}^{m+p}\|_{L^{\infty}}\Big(\|\Delta_{q}\delta\rho^{m+1}\|_{L^2}\nonumber\\&&+\|\Delta_{q}\delta\textbf{u}^{m+1}\|_{L^2}\Big)+
C\int^t_{0}\Big(\|[\textbf{u}^{m+p},\Delta_{q}]\cdot\nabla\delta\rho^{m+1}\|_{L^2}+\|\Delta_{q}(\delta\textbf{u}^{m}\cdot\nabla\rho^{m+1})\|_{L^2}\Big)
\nonumber\\&&+C\int^t_{0}\Big(\|[\textbf{u}^{m+p},\Delta_{q}]\cdot\nabla\delta\textbf{u}^{m+1}\|_{L^2}+\|\Delta_{q}(\delta\textbf{u}^{m}\cdot\nabla\textbf{u}^{m+1})\|_{L^2}
+\|\Delta_{q}\nabla
\delta\theta^{m}\|_{L^2}\nonumber\\&&+\|\Delta_{q}(\delta\theta^m\nabla\rho^{m+p})\|_{L^2}+\|\delta\rho^{m}\|_{L^\infty}(\|\Delta_{q}\nabla\theta^m\|_{L^2}+2^{q}\|\Delta_{q}\theta^m\|_{L^2})\nonumber\\&&+
\|[\nabla
\delta\rho^{m},\Delta_{q}]\theta^{m}\|_{L^2}+\|\Delta_{q}\delta\textbf{E}^{m}\|_{L^2}+\|\Delta_{q}\delta\textbf{u}^{m}\|_{L^2}\Big).
\label{R-E29}
\end{eqnarray}
By multiplying the factor $2^{q(\sigma-1)}$ on both sides of the
resulting inequality (\ref{R-E29}), we obtain
\begin{eqnarray}
&&2^{q(\sigma-1)}\Big(\|\Delta_{q}\delta\rho^{m+1}(t)\|_{L^2}+\|\Delta_{q}\delta\textbf{u}^{m+1}(t)\|_{L^2}\Big)\nonumber\\&\leq&C2^{q(\sigma-1)}\Big(\|\Delta_{q}\delta\rho^{m+1}_{0}\|_{L^2}+\|\Delta_{q}\delta\textbf{u}^{m+1}_{0}\|_{L^2}\Big)
\nonumber\\&&+C\int^t_{0}\|\nabla\textbf{u}^{m+p}\|_{L^{\infty}}2^{q(\sigma-1)}\Big(\|\Delta_{q}\delta\rho^{m+1}\|_{L^2}+\|\Delta_{q}\delta\textbf{u}^{m+1}\|_{L^2}\Big)\nonumber\\&&+
C\int^t_{0}\Big(c_{q}\|\textbf{u}^{m+p}\|_{B^{\sigma}_{2,1}}\|\delta\rho^{m+1}\|_{_{B^{\sigma-1}_{2,1}}}+c_{q}\|\delta\textbf{u}^{m}\|_{B^{\sigma-1}_{2,1}}\|\rho^{m+1}\|_{_{B^{\sigma}_{2,1}}}\Big)
\nonumber\\&&+C\int^t_{0}\Big(c_{q}\|\textbf{u}^{m+p}\|_{B^{\sigma}_{2,1}}\|\delta\textbf{u}^{m+1}\|_{B^{\sigma-1}_{2,1}}+c_{q}\|\delta\textbf{u}^{m}\|_{B^{\sigma-1}_{2,1}}\|\textbf{u}^{m+1}\|_{B^{\sigma}_{2,1}}
\nonumber\\&&+c_{q}\|
\delta\theta^{m}\|_{B^{\sigma}_{2,1}}+c_{q}\|\delta\theta^m\|_{B^{\sigma}_{2,1}}\|\rho^{m+p}\|_{B^{\sigma}_{2,1}}+c_{q}\|\delta\rho^{m}\|_{L^\infty}(\|\nabla\theta^m\|_{B^{\sigma-1}_{2,1}}+\|\theta^m\|_{B^{\sigma}_{2,1}})\nonumber\\&&+
c_{q}\|\nabla\theta^{m}\|_{B^{\sigma-1}_{2,1}}
\|\delta\rho^{m}\|_{B^{\sigma-1}_{2,1}}+2^{q(\sigma-1)}\|\Delta_{q}\delta\textbf{E}^{m}\|_{L^2}+2^{q(\sigma-1)}\|\Delta_{q}\delta\textbf{u}^{m}\|_{L^2}\Big),\label{R-E30}
\end{eqnarray}
where $\{c_{q}\}$ denotes some sequence which satisfies
$\|(c_{q})\|_{ {l^{1}}}\leq 1$.

Summing up (\ref{R-E30}) on $q\geq-1$, it is not difficult to get
\begin{eqnarray}
&&\|(\delta\rho^{m+1},\delta\textbf{u}^{m+1})\|_{\widetilde{L}^\infty_{T_{1}}(B^{\sigma-1}_{2,1})}
\nonumber\\&\leq&C\|(\delta\rho^{m+1}_{0},\delta\textbf{u}^{m+1}_{0})\|_{B^{\sigma-1}_{2,1}}
\nonumber\\&&+C\int^{T_{1}}_{0}\|\textbf{u}^{m+p}\|_{B^{\sigma}_{2,1}}\|(\delta\rho^{m+1},\delta\textbf{u}^{m+1})\|_{\widetilde{L}^\infty_{t}(B^{\sigma-1}_{2,1})}dt
\nonumber\\&&+
C\int^{T_{1}}_{0}\|\delta\textbf{u}^{m}\|_{B^{\sigma-1}_{2,1}}\Big(1+\|\rho^{m+1}\|_{B^{\sigma}_{2,1}}+\|\textbf{u}^{m+1}\|_{B^{\sigma}_{2,1}}\Big)dt
\nonumber\\&&+C\int^{T_{1}}_{0}\|\delta\theta^m\|_{B^{\sigma}_{2,1}}(1+\|\rho^{m+p}\|_{B^{\sigma}_{2,1}})dt
\nonumber\\&&+C\int^{T_{1}}_{0}\Big(\|\delta\rho^{m}\|_{B^{\sigma-1}_{2,1}}\|\theta^m\|_{B^{\sigma}_{2,1}}
+\|\delta\textbf{E}^{m}\|_{B^{\sigma-1}_{2,1}}\Big)dt\nonumber\\&\leq&
C2^{-m}\|(\rho_{0},\textbf{u}_{0})\|_{B^{\sigma}_{2,1}}\nonumber\\&&
+C\int^{T_{1}}_{0}\|\textbf{u}^{m+p}\|_{B^{\sigma}_{2,1}}\|(\delta\rho^{m+1},\delta\textbf{u}^{m+1})\|_{\widetilde{L}^\infty_{t}(B^{\sigma-1}_{2,1})}dt
\nonumber\\&&+C\int^{T_{1}}_{0}\Big(\|(\delta\rho^{m},\delta\textbf{u}^{m},\delta\textbf{E}^{m})\|_{B^{\sigma-1}_{2,1}}
+\|\delta\theta^m\|_{B^{\sigma}_{2,1}}\Big)
\nonumber\\&&\hspace{20mm}\times\Big(1+\|(\rho^{m+1},\rho^{m+p},\textbf{u}^{m+1},\theta^m)\|_{B^{\sigma}_{2,1}}\Big)dt,\label{R-E31}
\end{eqnarray}
where we have used Lemma \ref{lem2.1} and Remark \ref{rem2.1}.

With the aid of Gronwall's inequality, we immediately  deduce that
\begin{eqnarray}
&&\|(\delta\rho^{m+1},\delta\textbf{u}^{m+1})\|_{\widetilde{L}^\infty_{T_{1}}(B^{\sigma-1}_{2,1})}\nonumber\\&\leq&Ce^{CZ^{m+p}(T_{1})}
\Big\{2^{-m}\|(\rho_{0},\textbf{u}_{0})\|_{B^{\sigma}_{2,1}}\nonumber\\&&
+\int^{T_{1}}_{0}e^{-CZ^{m+p}(t)}\Big(\|(\delta\rho^{m},\delta\textbf{u}^{m},\delta\textbf{E}^{m})\|_{B^{\sigma-1}_{2,1}}
+\|\delta\theta^m\|_{B^{\sigma}_{2,1}}\Big)
\nonumber\\&&\hspace{20mm}\times\Big(1+\|(\rho^{m+1},\rho^{m+p},\textbf{u}^{m+1},\theta^m)\|_{B^{\sigma}_{2,1}}\Big)dt\Big\}
\nonumber\\&\leq&Ce^{CT_{1}}\Big\{2^{-m}
+T_{1}\Big(\|(\delta\rho^{m},\delta\textbf{u}^{m},\delta\textbf{E}^{m})\|_{\widetilde{L}^\infty_{T_{1}}(B^{\sigma-1}_{2,1})}
+\|\delta\theta^m\|_{\widetilde{L}^\infty_{T_{1}}(B^{\sigma}_{2,1})}\Big)\Big\},\label{R-E32}
\end{eqnarray}
where we have noticed Remark \ref{rem2.2} and the fact that the
sequence
$\{(\rho^{m},\textbf{u}^{m},\theta^{m},\textbf{E}^{m})\}_{m\in
\mathbb{N}}$ is uniformly bounded in $E_{T_{1}}^{\sigma}$.

From the last equation of (\ref{R-E24}), we get directly
\begin{eqnarray}
&&\|\delta\textbf{E}^{m+1}\|_{\widetilde{L}^\infty_{T_{1}}(B^{\sigma-1}_{2,1})}\nonumber\\&\leq&\|\delta\textbf{E}^{m+1}_{0}\|_{B^{\sigma-1}_{2,1}}
+\int^{T_{1}}_{0}\Big(\|\delta\rho^m\|_{B^{\sigma-1}_{2,1}}\|\textbf{u}^{m+p}\|_{B^{\sigma}_{2,1}}
+\|\delta\textbf{u}^m\|_{B^{\sigma-1}_{2,1}}(1+\|\rho^{m}\|_{B^{\sigma}_{2,1}})\Big)dt
\nonumber\\&\leq&C2^{-m}+CT_{1}\|(\delta\rho^m,\delta\textbf{u}^m)\|_{\widetilde{L}^\infty_{T_{1}}(B^{\sigma-1}_{2,1})}.\label{R-E33}
\end{eqnarray}
Using Proposition \ref{prop2.6} (taking $\alpha_{1}=\alpha=\infty,\
s=\sigma, \ p=2$ and $r=1$), we have
\begin{eqnarray}
&&\|\delta\theta^{m+1}\|_{\widetilde{L}^\infty_{T_{1}}(B^{\sigma}_{2,1})}\nonumber\\&\leq&
C\Big(\|\delta\theta^{m+1}_{0}\|_{B^{\sigma}_{2,1}}
+(1+T_{1})\tilde{\kappa}^{-1}\|F_{2}^{m}\|_{\widetilde{L}^\infty_{T_{1}}(B^{\sigma-2}_{2,1})}\Big),\label{R-E34}
\end{eqnarray}
where \begin{eqnarray*}F^{m}_{2}&:=&-\delta\textbf{u}^{m}\cdot\nabla
\theta^{m+p}-\textbf{u}^{m}\nabla\delta\theta^{m}+[h_{1}(\rho^{m+p})-h_{1}(\rho^{m})]\Delta\theta^{m+p}
\nonumber\\&&+h_{1}(\rho^m)\Delta\delta\theta^m-(\gamma-1)(\mathcal{T}_{L}+\theta^{m})
\mathrm{div}\delta\textbf{u}^{m}-\delta\theta^{m}\mathrm{div}\textbf{u}^{m+p}
\nonumber\\&&+\frac{\gamma-1}{2}(\textbf{u}^{m+p}+\textbf{u}^{m})\delta\textbf{u}^{m}-\delta\theta^{m}.\end{eqnarray*}

In bounding $F^{m}_{2}$, each product term can be estimated
effectively with the help of the standard Moser-type inequality
(Proposition \ref{prop2.3}), except for the term
$h_{1}(\rho^m)\Delta\delta\theta^m$.
 Here,
we develop a Moser-type inequality of general form to estimate
$h_{1}(\rho^m)\Delta\delta\theta^m$, which will be shown in the
Appendix, see Proposition \ref{prop4.1}. According to it, we can
reach
\begin{eqnarray}
\|h_{1}(\rho^m)\Delta\delta\theta^m\|_{B^{\sigma-2}_{2,1}}\leq C
(\|h_{1}(\rho^m)\|_{L^\infty}\|\Delta\delta\theta^m\|_{B^{\sigma-2}_{2,1}}
+\|\Delta\delta\theta^m\|_{L^2}\|h_{1}(\rho^m)\|_{B^{\sigma-2}_{\infty,1}}).\label{R-E35}
\end{eqnarray}
The second term in (\ref{R-E35}) can be further estimated as
\begin{eqnarray}
&&\|\Delta\delta\theta^m\|_{L^2}\|h_{1}(\rho^m)\|_{B^{\sigma-2}_{\infty,1}}\nonumber\\&\leq&
C\|\Delta\delta\theta^m\|_{B^{\sigma-2}_{2,1}}\|h_{1}(\rho^m)\|_{B^{N-1}_{2,1}}\
(\sigma=1+N/2,\ N\geq2)\nonumber\\&\leq&
C\|\delta\theta^m\|_{B^{\sigma}_{2,1}}\|h_{1}(\rho^m)\|_{B^{\sigma}_{2,1}}\nonumber\\&\leq&
C\|\delta\theta^m\|_{B^{\sigma}_{2,1}}\|\rho^m\|_{B^{\sigma}_{2,1}},\label{R-E36}
\end{eqnarray}
where we used the embedding properties
$B^{\sigma-2}_{2,1}\hookrightarrow L^2$ and
$B^{N-1}_{2,1}\hookrightarrow B^{\sigma-2}_{\infty,1}$. To ensure
$B^{\sigma}_{2,1}\hookrightarrow B^{N-1}_{2,1}$, $N-1\leq 1+N/2$
i.e. $N\leq4$ is required in the last second step of (\ref{R-E36}).

Combining (\ref{R-E34})-(\ref{R-E36}) and recalling on the choice of
$\tilde{\kappa}(\tilde{\kappa}>\frac{1+T_{1}}{T_{1}})$, we conclude
that
\begin{eqnarray}
&&\|\delta\theta^{m+1}\|_{\widetilde{L}^\infty_{T_{1}}(B^{\sigma}_{2,1})}\nonumber\\&\leq&
C\Big\{2^{-m}+T_{1}\Big(\|(\delta\rho^{m},\delta\textbf{u}^{m})\|_{\widetilde{L}^\infty_{T_{1}}(B^{\sigma-1}_{2,1})}
+\|\delta\theta^{m}\|_{\widetilde{L}^\infty_{T_{1}}(B^{\sigma}_{2,1})}\Big)\Big\},\label{R-E37}
\end{eqnarray}
Therefore, together with (\ref{R-E32})-(\ref{R-E33}) and
(\ref{R-E37}), we end up with
\begin{eqnarray}
&&\|(\delta\rho^{m+1},\delta\textbf{u}^{m+1},\delta\textbf{E}^{m+1})\|_{\widetilde{L}^\infty_{T_{1}}(B^{\sigma-1}_{2,1})}+
\|\delta\theta^{m+1}\|_{\widetilde{L}^\infty_{T_{1}}(B^{\sigma}_{2,1})}\nonumber\\&\leq&
C_{T_{1}}\Big\{2^{-m}+T_{1}\Big(\|(\delta\rho^{m},\delta\textbf{u}^{m},\delta\textbf{E}^{m})\|_{\widetilde{L}^\infty_{T_{1}}(B^{\sigma-1}_{2,1})}
+\|\delta\theta^m\|_{\widetilde{L}^\infty_{T_{1}}(B^{\sigma}_{2,1})}\Big)\Big\},
\label{R-E38}
\end{eqnarray} where $C_{T_{1}}:=Ce^{CT_{1}}$.
Arguing by induction, one can easily deduce that
\begin{eqnarray}
&&\|(\delta\rho^{m+1},\delta\textbf{u}^{m+1},\delta\textbf{E}^{m+1})\|_{\widetilde{L}^\infty_{T_{1}}(B^{\sigma-1}_{2,1})}+
\|\delta\theta^{m+1}\|_{\widetilde{L}^\infty_{T_{1}}(B^{\sigma}_{2,1})}\nonumber\\&\leq&
\frac{(T_{1}C_{T_{1}})^{m+1}}{(m+1)!}\Big(\|(\delta\rho^{p},\delta\textbf{u}^{p},\delta\textbf{E}^{p})\|_{\widetilde{L}^\infty_{T_{1}}(B^{\sigma-1}_{2,1})}
+\|\delta\theta^p\|_{\widetilde{L}^\infty_{T_{1}}(B^{\sigma}_{2,1})}\Big)
\nonumber\\&&+C_{T_{1}}\sum^{m}_{k=0}2^{-(m-k)}\frac{(T_{1}C_{T_{1}})^{k}}{k!}.\label{R-E39}
\end{eqnarray}
As
$\|(\delta\rho^{p},\delta\textbf{u}^{p},\delta\textbf{E}^{p})\|_{\widetilde{L}^\infty_{T_{1}}(B^{\sigma-1}_{2,1})}
+\|\delta\theta^p\|_{\widetilde{L}^\infty_{T_{1}}(B^{\sigma}_{2,1})}$
can be bounded independent of $p$, we further take $T_{1}$ so small
that $$\frac{(T_{1}C_{T_{1}})^{m+1}}{(m+1)!}\leq C2^{-m}\ \
\mbox{and}\ \  \frac{C_{T_{1}}(T_{1}C_{T_{1}})^{k}}{k!}\leq
4^{-k}.$$ Thus we conclude that there exists some constant $C_{2}>0$
(independent of $m$) such that
\begin{eqnarray}
&&\|(\delta\rho^{m+1},\delta\textbf{u}^{m+1},\delta\textbf{E}^{m+1})\|_{\widetilde{L}^\infty_{T_{1}}(B^{\sigma-1}_{2,1})}+
\|\delta\theta^{m+1}\|_{\widetilde{L}^\infty_{T_{1}}(B^{\sigma}_{2,1})}\leq
C_{2}2^{-m},\label{R-E40}
\end{eqnarray}
which implies
$\{(\rho^{m},\textbf{u}^{m},\theta^{m},\textbf{E}^{m})\}_{m\in
\mathbb{N}}$ is a Cauchy sequence in $E^{\sigma-1}_{T_{1}}$.
Therefore, there exists some function
$(\rho,\textbf{u},\theta,\textbf{E})$ in $E^{\sigma-1}_{T_{1}}$ such
that
$$\{(\rho^{m},\textbf{u}^{m},\theta^{m},\textbf{E}^{m})\}\rightarrow(\rho,\textbf{u},\theta,\textbf{E})\ \  \mbox{strongly in}\ \ E^{\sigma-1}_{T_{1}}.$$

\noindent\textbf{Step4: the solution
$(\rho,\textbf{u},\theta,\textbf{E})$}

In this step we show that $(\rho,\textbf{u},\theta,\textbf{E})\in
E^{\sigma}_{T_{1}}$ is a solution of the system
(\ref{R-E5})-(\ref{R-E6}). Fatou's property ensures that
$(\rho,\textbf{u},\theta,\textbf{E})\in
\widetilde{L}^\infty_{T_{1}}(B^{\sigma}_{2,1})\times(\widetilde{L}^\infty_{T_{1}}(B^{\sigma}_{2,1}))^N\times
\widetilde{L}^\infty_{T_{1}}(B^{\sigma+1}_{2,1})\times(\widetilde{L}^\infty_{T_{1}}(B^{\sigma}_{2,1}))^N$,
since $\{(\rho^{m},\textbf{u}^{m},\theta^{m},
\textbf{E}^{m})\}_{m\in \mathbb{N}}$ is also uniformly bounded in
the spaces
$\widetilde{L}^\infty_{T_{1}}(B^{\sigma}_{2,1})\times(\widetilde{L}^\infty_{T_{1}}(B^{\sigma}_{2,1}))^N
\times\widetilde{L}^\infty_{T_{1}}(B^{\sigma+1}_{2,1})\times(\widetilde{L}^\infty_{T_{1}}(B^{\sigma}_{2,1}))^N$.

On the other hand,
$\{(\rho^{m},\textbf{u}^{m},\textbf{E}^{m})\}_{m\in \mathbb{N}}$
converges to $(\rho,\textbf{u},\textbf{E})$ in
$\mathcal{C}([0,T_{1}];B^{\sigma-1}_{2,1})$ and
$\{\theta^{m}\}_{m\in \mathbb{N}}$ converges to $\theta$ in
$\mathcal{C}([0,T_{1}];B^{\sigma}_{2,1})$. These properties of
strong convergence enable us to pass to the limits in the system
(\ref{R-E5})-(\ref{R-E6}) and conclude that
$(\rho,\textbf{u},\theta,\textbf{E})$ to the system
(\ref{R-E5})-(\ref{R-E6}). Now, what remains is to check
$(\rho,\textbf{u},\theta,\textbf{E})$ also belongs to
$\mathcal{C}([0,T_{1}];B^{\sigma}_{2,1})\times(\mathcal{C}([0,T_{1}];B^{\sigma}_{2,1}))^N\times
\mathcal{C}([0,T_{1}];B^{\sigma+1}_{2,1})\times(\mathcal{C}([0,T_{1}];B^{\sigma}_{2,1}))^N$.
Indeed, for instance, we easily achieve that the map
$t\mapsto\|\Delta_{q}\rho(t)\|_{L^2}$ is continuous on $[0,T_{1}]$,
since $\rho\in \mathcal{C}([0,T_{1}];B^{\sigma-1}_{2,1})$. Then we
have $\Delta_{q}\rho(t)\in \mathcal{C}([0,T_{1}];B^{\sigma}_{2,1})$
for all $q\geq-1$. Note that
$\rho\in\widetilde{L}^\infty_{T_{1}}(B^{\sigma}_{2,1})$, the series
$\sum_{q\geq-1}2^{q\sigma}\|\Delta_{q}\rho(t)\|_{L^2}$ converges
uniformly on $[0,T_{1}]$, which yields $\rho\in
\mathcal{C}([0,T_{1}];B^{\sigma}_{2,1})$. The same arguments are
valid for the other variables $(\textbf{u},\theta,\textbf{E})$.
Hence, we finish the existence part of
solutions. \\

\noindent\textbf{Step5: uniqueness}

 Let $\widetilde{\rho}=\rho_{1}-\rho_{2},\
\widetilde{\textbf{u}}=\textbf{u}_{1}-\textbf{u}_{2},\
\widetilde{\theta}=\theta_{1}-\theta_{2}, \
\widetilde{\textbf{E}}=\textbf{E}_{1}-\textbf{E}_{2}$ where
$(\rho_{1},\textbf{u}_{1},\theta_{1},\textbf{E}_{1})^{\top}$ and
$(\rho_{2},\textbf{u}_{2},\theta_{2},\textbf{E}_{2})^{\top}$ are two
solutions to the system (\ref{R-E5})-(\ref{R-E6}) subject to the
same initial data, respectively. Then the error solution
$(\widetilde{\rho}, \widetilde{\textbf{u}},\widetilde{\theta},
\widetilde{\textbf{E}})^{\top}$ satisfies
\begin{equation}
\left\{
\begin{array}{l}
\partial_{t}\widetilde{\rho}+\mbox{div}\widetilde{\textbf{u}}=-\textbf{u}_{1}\cdot\nabla
\widetilde{\rho}-\widetilde{\textbf{u}}\cdot\nabla\rho_{2},\\[2mm]
\partial_{t}\widetilde{\textbf{u}}+\mathcal{T}_{L}\nabla
\widetilde{\rho}=-\nabla\widetilde{\theta}-\textbf{u}_{1}\cdot\nabla\widetilde{\textbf{u}}-\widetilde{\textbf{u}}\nabla\textbf{u}_{2}-\theta_{1}\nabla
\widetilde{\rho}-\widetilde{\theta}\nabla
\rho_{2}+\widetilde{\textbf{E}}-\widetilde{\textbf{u}},
\\[2mm]
\partial_{t}\widetilde{\theta}-\widetilde{\kappa}\Delta\widetilde{\theta}
=-\textbf{u}_{1}\cdot\nabla\widetilde{\theta}-\widetilde{\textbf{u}}\nabla\theta_{2}+[h_{1}(\rho_{1})-h_{1}(\rho_{2})]\Delta\theta_{1}
+h_{1}(\rho_{2})\Delta\widetilde{\theta}\\
\hspace{20mm}-(\gamma-1)(\mathcal{T}_{L}+\theta_{1})\mbox{div}\widetilde{\textbf{u}}
-(\gamma-1)\widetilde{\theta}\mbox{div}\textbf{u}_{2}
+\frac{(\gamma-1)}{2}\widetilde{\textbf{u}}(\textbf{u}_{1}+\textbf{u}_{2})-\widetilde{\theta},
\\ [2mm] \partial_{t}\widetilde{\textbf{E}}=-\nabla\Delta^{-1}\nabla\cdot[(h_{2}(\rho_1)-h_{2}(\rho_2))\textbf{u}_{1}
+(h_{2}(\rho_{2})+\bar{n})\widetilde{\textbf{u}}].
\end{array}
\right.\label{R-E41}
\end{equation}
As previously, following from the proof of Cauchy sequence, we
obtain the inequalities:
\begin{eqnarray}\|(\widetilde{\rho},\widetilde{\textbf{u}})\|_{\widetilde{L}^\infty_{T_{1}}(B^{\sigma-1}_{2,1})}&\leq& C
\int^{T_{1}}_{0}\Big(\|(\widetilde{\rho},\widetilde{\textbf{u}},\widetilde{\textbf{E}})\|_{B^{\sigma-1}_{2,1}}\Big)
\Big(1+\|(\rho_{1},\rho_{2},\textbf{u}_{1},\textbf{u}_{2},\theta_{1})\|_{B^{\sigma}_{2,1}}\Big)dt
\nonumber\\&&+\|\widetilde{\theta}\|_{L^1_{T_{1}}(B^{\sigma}_{2,1})}\label{R-E42}
\end{eqnarray}
and
\begin{equation}\|\widetilde{\textbf{E}}\|_{\widetilde{L}^\infty_{T_{1}}(B^{\sigma-1}_{2,1})}\leq C
\int^{T_{1}}_{0}\|(\widetilde{\rho},\widetilde{\textbf{u}})\|_{B^{\sigma-1}_{2,1}}
\Big(1+\|(\rho_{2},\textbf{u}_{1})\|_{B^{\sigma}_{2,1}}\Big)dt.\label{R-E43}
\end{equation}
According to Proposition \ref{prop2.6} (taking
$\alpha_{1}=\alpha=1,\ s=\sigma-2, \ p=2$ and $r=1$), we have
\begin{eqnarray}
\|\widetilde{\theta}\|_{\widetilde{L}^1_{T_{1}}(B^{\sigma}_{2,1})}&\leq&
CT_{1}\Big\{(1+\|(\rho_{2},\textbf{u}_{1},\textbf{u}_{2})\|_{\widetilde{L}^\infty_{T_{1}}(B^{\sigma-1}_{2,1})})
\|\widetilde{\theta}\|_{\widetilde{L}^1_{T_{1}}(B^{\sigma}_{2,1})}
\nonumber\\&&+\int^{T_{1}}_{0}\|(\widetilde{\rho},\widetilde{\textbf{u}})\|_{B^{\sigma-1}_{2,1}}
(1+\|(\textbf{u}_{1},\textbf{u}_{2})\|_{B^{\sigma}_{2,1}}+\|(\theta_{1},\theta_{2})\|_{B^{\sigma}_{2,1}})dt\Big\}.\label{R-E44}
\end{eqnarray}
We further choose $T_{1}$ satisfying
$$T_{1}\leq\frac{1}{2C(1+\|(\rho_{2},\textbf{u}_{1},\textbf{u}_{2})\|_{\widetilde{L}^\infty_{T_{1}}(B^{\sigma-1}_{2,1})})},$$
and obtain
\begin{eqnarray}
\|\widetilde{\theta}\|_{\widetilde{L}^1_{T_{1}}(B^{\sigma}_{2,1})}
&\leq&C\int^{T_{1}}_{0}\|(\widetilde{\rho},\widetilde{\textbf{u}})\|_{B^{\sigma-1}_{2,1}}(1+\|(\textbf{u}_{1},\textbf{u}_{2})\|_{B^{\sigma}_{2,1}}
+\|(\theta_{1},\theta_{2})\|_{B^{\sigma}_{2,1}})dt. \label{R-E45}
\end{eqnarray}
Note that $\widetilde{L}^1_{T_{1}}(B^{\sigma}_{2,1})\equiv
L^1_{T_{1}}(B^{\sigma}_{2,1})$, we insert (\ref{R-E45}) into
(\ref{R-E42}) to get after combining (\ref{R-E43})
\begin{eqnarray}
&&\|(\widetilde{\rho},\widetilde{\textbf{u}},\widetilde{\textbf{E}})\|_{\widetilde{L}^\infty_{T_{1}}(B^{\sigma-1}_{2,1})}\nonumber\\&\leq&
C\int^{T_{1}}_{0}\|(\widetilde{\rho},\widetilde{\textbf{u}},\widetilde{\textbf{E}})\|_{\widetilde{L}^\infty_{t}(B^{\sigma-1}_{2,1})}
\Big(1+\|(\rho_{1},\rho_{2},\textbf{u}_{1},\textbf{u}_{2})\|_{B^{\sigma}_{2,1}}+\|(\theta_{1},\theta_{2})\|_{B^{\sigma+1}_{2,1}}\Big)dt.
\label{R-E50}
\end{eqnarray}
Gronwall's inequality gives
$(\widetilde{\rho},\widetilde{\textbf{u}},\widetilde{\textbf{E}})\equiv
\mathbf{0}$ immediately. Substituting it into (\ref{R-E45}),
$\widetilde{\theta}=0$ is also followed.

Finally, by Remark \ref{rem3.1}, we can arrive at Theorem
\ref{thm1.1} satisfying the inequality (\ref{R-E1001}). Hence the
proof of Theorem \ref{thm1.1} is complete.

\section{Appendix}\setcounter{equation}{0}
In the last section, we give the crucial Moser-type inequality in
the non-homogeneous Besov spaces and Chemin-Lerner's spaces. For the
homogeneous version, which has been remarked by Zhou in \cite{Z}.

\begin{prop}\label{prop4.1}
Let $s>0$ and $1\leq p,r,p_{1},p_{2},p_{3},p_{4}\leq\infty$. Assume
that $f\in L^{p_{1}}\cap B^{s}_{p_{4},r}$ and $g\in L^{p_{3}}\cap
B^{s}_{p_{2},r}$ with
$$\frac{1}{p}=\frac{1}{p_{1}}+\frac{1}{p_{2}}=\frac{1}{p_{3}}+\frac{1}{p_{4}}.$$
Then it holds that
\begin{eqnarray}
\|fg\|_{B^{s}_{p,r}}\leq
C(\|f\|_{L^{p_{1}}}\|g\|_{B^{s}_{p_{2},r}}+\|g\|_{L^{p_{3}}}\|f\|_{B^{s}_{p_{4},r}}).\label{R-E51}
\end{eqnarray}
In particular, whenever $s\geq N/p$, there holds
\begin{eqnarray}
\|fg\|_{B^{s}_{p,r}}\leq
C\|f\|_{B^{s}_{p,r}}\|g\|_{B^{s}_{p,r}}.\label{R-E52}
\end{eqnarray}

\end{prop}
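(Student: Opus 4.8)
The plan is to prove \eqref{R-E51} by means of the Bony decomposition $fg=T_fg+T_gf+R(f,g)$ and to estimate each of the three pieces separately in $B^s_{p,r}$, paying attention to which Lebesgue exponent the low-frequency (paraproduct) factor is measured in. For the paraproduct $T_fg=\sum_q S_{q-1}f\,\Delta_q g$, I would use the almost-orthogonality in Proposition \ref{prop2.1}: the Fourier support of $S_{q-1}f\,\Delta_q g$ lies in a dyadic annulus of size $2^q$, so for $q'\geq -1$ only finitely many $q$ with $|q-q'|\leq 4$ contribute to $\Delta_{q'}(T_fg)$. Then by H\"older's inequality with $\frac1p=\frac1{p_1}+\frac1{p_2}$,
$$\|\Delta_{q'}(T_fg)\|_{L^p}\leq C\sum_{|q-q'|\leq 4}\|S_{q-1}f\|_{L^{p_1}}\|\Delta_q g\|_{L^{p_2}}\leq C\|f\|_{L^{p_1}}\sum_{|q-q'|\leq 4}\|\Delta_q g\|_{L^{p_2}},$$
where $\|S_{q-1}f\|_{L^{p_1}}\leq C\|f\|_{L^{p_1}}$ since the family $(S_{q-1})$ is uniformly bounded on $L^{p_1}$. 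Multiplying by $2^{q's}$ and taking the $\ell^r$ norm in $q'$, a discrete Young-type (convolution by a finitely supported sequence) argument yields $\|T_fg\|_{B^s_{p,r}}\leq C\|f\|_{L^{p_1}}\|g\|_{B^s_{p_2,r}}$. By symmetry, using instead $\frac1p=\frac1{p_3}+\frac1{p_4}$, one gets $\|T_gf\|_{B^s_{p,r}}\leq C\|g\|_{L^{p_3}}\|f\|_{B^s_{p_4,r}}$.

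For the remainder $R(f,g)=\sum_q\Delta_qf\,\tilde\Delta_qg$, the Fourier support of $\Delta_qf\,\tilde\Delta_qg$ is contained in a ball of radius $C2^q$, so $\Delta_{q'}R(f,g)$ picks up only the terms with $q\geq q'-N_0$ for some fixed $N_0$. Using Bernstein's inequality (Lemma \ref{lem2.1}) to absorb the low-frequency spreading and H\"older with $\frac1p=\frac1{p_1}+\frac1{p_2}$ on each dyadic block,
$$\|\Delta_{q'}R(f,g)\|_{L^p}\leq C2^{q'N(\ldots)}\sum_{q\geq q'-N_0}\|\Delta_qf\|_{L^{p_1}}\|\tilde\Delta_qg\|_{L^{p_2}}\leq C\|f\|_{L^{p_1}}\sum_{q\geq q'-N_0}\|\tilde\Delta_qg\|_{L^{p_2}};$$
here the crucial point is $s>0$, which makes the geometric series $\sum_{q\geq q'-N_0}2^{(q'-q)s}$ summable, so that after multiplying by $2^{q's}$ and applying the $\ell^r$ triangle/Young inequality one obtains $\|R(f,g)\|_{B^s_{p,r}}\leq C\|f\|_{L^{p_1}}\|g\|_{B^s_{p_2,r}}$. (One could equally route the remainder through $p_3,p_4$; either choice is dominated by the right-hand side of \eqref{R-E51}.) Summing the three contributions gives \eqref{R-E51}.

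For the particular case \eqref{R-E52}, I would take $p_1=p_3=\infty$ and $p_2=p_4=p$, so the constraint $\frac1p=\frac1{p_1}+\frac1{p_2}=\frac1{p_3}+\frac1{p_4}$ is satisfied, and \eqref{R-E51} reads $\|fg\|_{B^s_{p,r}}\leq C(\|f\|_{L^\infty}\|g\|_{B^s_{p,r}}+\|g\|_{L^\infty}\|f\|_{B^s_{p,r}})$; then the embedding $B^{N/p}_{p,1}\hookrightarrow L^\infty$ from Lemma \ref{lem2.2}, together with $B^s_{p,r}\hookrightarrow B^{N/p}_{p,1}$ when $s\geq N/p$ (Lemma \ref{lem2.2} again, combined with $r\leq\infty$ in the first embedding chain and a Besov inclusion in the regularity index when $s>N/p$), bounds both $\|f\|_{L^\infty}$ and $\|g\|_{L^\infty}$ by the corresponding $B^s_{p,r}$ norms. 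I expect the main obstacle to be purely bookkeeping: keeping the two distinct H\"older splittings $(p_1,p_2)$ and $(p_3,p_4)$ straight through the paraproduct/remainder case distinction, and verifying that the remainder term is genuinely controlled by the stated right-hand side under \emph{both} exponent pairs so that nothing outside $L^{p_1}\cap B^s_{p_4,r}$ and $L^{p_3}\cap B^s_{p_2,r}$ is needed; the summability in $q$ is standard once $s>0$ is used.
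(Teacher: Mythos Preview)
Your proposal is correct and follows essentially the same route as the paper: Bony decomposition, paraproducts handled via Proposition~\ref{prop2.1} plus H\"older with the two splittings $(p_1,p_2)$ and $(p_3,p_4)$, and the remainder controlled using $s>0$; the deduction of \eqref{R-E52} via $p_1=p_3=\infty$, $p_2=p_4=p$ and the embedding into $L^\infty$ is also the paper's choice. The only cosmetic difference is that the paper dispatches the remainder by citing Proposition~\ref{prop2.2} (with $s_1=0$, $s_2=s$) together with $L^{p_1}\hookrightarrow B^{0}_{p_1,\infty}$, whereas you reprove that bound by hand; also, your mention of a Bernstein factor $2^{q'N(\ldots)}$ in the remainder estimate is unnecessary since no change of integrability occurs there.
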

\begin{proof}
From Bony's decomposition, we have
$$fg=T_{f}g+T_{g}f+R(f,g).$$
It follows from Proposition \ref{prop2.1} that
\begin{eqnarray}
2^{qs}\|\Delta_{q}T_{f}g\|_{L^p}&\leq&
2^{qs}\sum_{|q-q'|\leq4}\|\Delta_{q}(S_{q'}f\Delta_{q'}g)\|_{L^p}\nonumber\\&\leq&
\sum_{|q-q'|\leq4}2^{(q-q')s}\|S_{q'}f\|_{L^{p_{1}}}2^{q's}\|\Delta_{q'}g\|_{L^{p_{2}}}\Big(\frac{1}{p}=\frac{1}{p_{1}}+\frac{1}{p_{2}}\Big)
\nonumber\\&\leq&
C\sum_{|q-q'|\leq4}2^{(q-q')s}\|f\|_{L^{p_{1}}}2^{q's}\|\Delta_{q'}g\|_{L^{p_{2}}}\nonumber\\&\leq&
Cc_{q1}\|f\|_{L^{p_{1}}}\|g\|_{B^{s}_{p_{2},r}},\label{R-E53}
\end{eqnarray}
where
$c_{q1}:=\sum_{|q-q'|\leq4}\frac{2^{q's}\|\Delta_{q'}g\|_{L^{p_{2}}}}{9\|g\|_{B^{s}_{p_{2},r}}}$
satisfies $\|c_{q1}\|_{\ell^r}\leq1$. Similarly,
\begin{eqnarray}
2^{qs}\|\Delta_{q}T_{g}f\|_{L^p}&\leq&
2^{qs}\sum_{|q-q'|\leq4}\|\Delta_{q}(S_{q'}g\Delta_{q'}f)\|_{L^p}\nonumber\\&\leq&
Cc_{q2}\|g\|_{L^{p_{3}}}\|f\|_{B^{s}_{p_{4},r}},\label{R-E54}
\end{eqnarray}
where
$c_{q2}:=\sum_{|q-q'|\leq4}\frac{2^{q's}\|\Delta_{q'}f\|_{L^{p_{2}}}}{9\|f\|_{B^{s}_{p_{2},r}}}$
satisfies $\|c_{q2}\|_{\ell^r}\leq1$.

On the other hand, from Proposition \ref{prop2.2}, we arrive at
\begin{eqnarray}
&&2^{qs}\|\Delta_{q}R(f,g)\|_{L^p}\nonumber\\&\leq&Cc_{q3}\|R(f,g)\|_{B^{s}_{p,q}}
\nonumber\\&\leq&Cc_{q3}\|f\|_{B^{0}_{p_{1},\infty}}\|g\|_{B^{s}_{p_{2},q}}(s>0)
\nonumber\\&\leq&Cc_{q3}\|f\|_{L^{p_{1}}}\|g\|_{B^{s}_{p_{2},q}},\label{R-E55}
\end{eqnarray}
where
$c_{q3}:=\frac{2^{qs}\|\Delta_{q}R(f,g)\|_{L^p}}{\|R(f,g)\|_{B^{s}_{p,q}}}$
satisfies $\|c_{q3}\|_{\ell^r}\leq1$. In the last step, we used the
embedding property $B^{0}_{p_{1},1}\hookrightarrow
L^{p_{1}}\hookrightarrow B^{0}_{p_{1},\infty}$.

Hence, (\ref{R-E51}) follows from (\ref{R-E53})-(\ref{R-E55}).
Moreover, the embedding properties in Lemma \ref{lem2.2} give
(\ref{R-E52}) directly.
\end{proof}

It is not difficult to generalize Proposition \ref{prop4.1} to the
framework of Chemin-Lerner's spaces
$\widetilde{L}^{\rho}_{T}(B^{s}_{p,r})$. The indices $s,p,r$ behave
just as the stationary case whereas the time exponent $\rho$ behaves
according to H\"{o}lder's inequality, which is given by a
proposition for clarity.
\begin{prop}\label{prop4.2}
The following estimate holds:
$$
\|fg\|_{\widetilde{L}^{\rho}_{T}(B^{s}_{p,r})}\leq
C(\|f\|_{L^{\rho_{1}}_{T}(L^{p_{1}})}\|g\|_{\widetilde{L}^{\rho_{2}}_{T}(B^{s}_{p_{2},r})}
+\|g\|_{L^{\rho_{3}}_{T}(L^{p_{3}})}\|f\|_{\widetilde{L}^{\rho_{4}}_{T}(B^{s}_{p_{4},r})})
$$
whenever $s>0, \ 1\leq p,r\leq\infty,\ 1\leq p_{1},p_{2},p_{3},p_{4}
\leq\infty,\
1\leq\rho,\rho_{1},\rho_{2},\rho_{3},\rho_{4}\leq\infty$ with
$$\frac{1}{p}=\frac{1}{p_{1}}+\frac{1}{p_{2}}=\frac{1}{p_{3}}+\frac{1}{p_{4}},$$
and
$$\frac{1}{\rho}=\frac{1}{\rho_{1}}+\frac{1}{\rho_{2}}=\frac{1}{\rho_{3}}+\frac{1}{\rho_{4}}.$$
As a direct corollary, one has
$$\|fg\|_{\widetilde{L}^{\rho}_{T}(B^{s}_{p,r})}
\leq
C\|f\|_{\widetilde{L}^{\rho_{1}}_{T}(B^{s}_{p,r})}\|g\|_{\widetilde{L}^{\rho_{2}}_{T}(B^{s}_{p,r})}$$
whenever $s\geq N/p.$
\end{prop}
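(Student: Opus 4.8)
The plan is to reproduce the proof of Proposition \ref{prop4.1} line by line, using Bony's decomposition $fg=T_{f}g+T_{g}f+R(f,g)$, and to carry the extra time variable along by inserting H\"older's inequality in time at every step. Concretely, after applying $\Delta_{q}$ one estimates the $L^{\rho}_{T}(L^{p})$-norm of each of the three pieces, multiplies by $2^{qs}$, and shows the outcome is at most $Cc_{q}$ times the right-hand side of the asserted inequality, where $\|(c_{q})\|_{\ell^{r}}\leq1$; summing the $\ell^{r}(q)$-norm then gives the claim. Note that the hypotheses $\frac1p=\frac1{p_1}+\frac1{p_2}=\frac1{p_3}+\frac1{p_4}$ and $1\leq p\leq\infty$ force $\frac1{p_1}+\frac1{p_2}\leq1$ and $\frac1{p_3}+\frac1{p_4}\leq1$, so all the H\"older products in space are licit.

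For the low--high paraproduct, Proposition \ref{prop2.1} gives $\Delta_{q}(T_{f}g)=\sum_{|q-q'|\leq4}\Delta_{q}(S_{q'-1}f\,\Delta_{q'}g)$, a finite sum. H\"older in space yields $\|\Delta_{q}(S_{q'-1}f\,\Delta_{q'}g)\|_{L^{p}}\leq\|S_{q'-1}f\|_{L^{p_{1}}}\|\Delta_{q'}g\|_{L^{p_{2}}}$; H\"older in time with $\frac1\rho=\frac1{\rho_{1}}+\frac1{\rho_{2}}$ then gives $\|\Delta_{q}(S_{q'-1}f\,\Delta_{q'}g)\|_{L^{\rho}_{T}(L^{p})}\leq\|S_{q'-1}f\|_{L^{\rho_{1}}_{T}(L^{p_{1}})}\|\Delta_{q'}g\|_{L^{\rho_{2}}_{T}(L^{p_{2}})}\leq C\|f\|_{L^{\rho_{1}}_{T}(L^{p_{1}})}\|\Delta_{q'}g\|_{L^{\rho_{2}}_{T}(L^{p_{2}})}$, using the uniform $L^{p_{1}}$-boundedness of $S_{q'-1}$. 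Writing $2^{qs}=2^{(q-q')s}2^{q's}$ and summing the finite sum over $|q-q'|\leq4$ produces $2^{qs}\|\Delta_{q}(T_{f}g)\|_{L^{\rho}_{T}(L^{p})}\leq Cc_{q}\|f\|_{L^{\rho_{1}}_{T}(L^{p_{1}})}\|g\|_{\widetilde{L}^{\rho_{2}}_{T}(B^{s}_{p_{2},r})}$, exactly as in (\ref{R-E53}). The high--low paraproduct $T_{g}f$ is handled symmetrically with $(p_{3},p_{4},\rho_{3},\rho_{4})$ in place of $(p_{1},p_{2},\rho_{1},\rho_{2})$, giving $Cc_{q}\|g\|_{L^{\rho_{3}}_{T}(L^{p_{3}})}\|f\|_{\widetilde{L}^{\rho_{4}}_{T}(B^{s}_{p_{4},r})}$.

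The only step needing real care is the remainder, since Proposition \ref{prop2.2} is stated only in the stationary setting; I would estimate $R(f,g)$ by hand. The spectral localization of $\Delta_{q'}f\,\tilde{\Delta}_{q'}g$ forces $\Delta_{q}(\Delta_{q'}f\,\tilde{\Delta}_{q'}g)=0$ unless $q'\geq q-N_{0}$ for a fixed $N_{0}$, so by $L^{p}$-boundedness of $\Delta_{q}$ and H\"older in space, $\|\Delta_{q}R(f,g)\|_{L^{p}}\leq C\sum_{q'\geq q-N_{0}}\|\Delta_{q'}f\|_{L^{p_{1}}}\|\tilde{\Delta}_{q'}g\|_{L^{p_{2}}}$, and then H\"older in time gives $\|\Delta_{q}R(f,g)\|_{L^{\rho}_{T}(L^{p})}\leq C\sum_{q'\geq q-N_{0}}\|\Delta_{q'}f\|_{L^{\rho_{1}}_{T}(L^{p_{1}})}\|\tilde{\Delta}_{q'}g\|_{L^{\rho_{2}}_{T}(L^{p_{2}})}$. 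Multiplying by $2^{qs}=2^{(q-q')s}2^{q's}$ with $q-q'\leq N_{0}$ and $s>0$, the factor $2^{(q-q')s}$ is summable over the relevant indices, so a discrete Young inequality (convolution of a finitely-supported, hence $\ell^{1}$, sequence with the $\ell^{r}$-sequence $2^{q's}\|\tilde{\Delta}_{q'}g\|_{L^{\rho_{2}}_{T}(L^{p_{2}})}$), combined with $\sup_{q'}\|\Delta_{q'}f\|_{L^{\rho_{1}}_{T}(L^{p_{1}})}\leq C\|f\|_{L^{\rho_{1}}_{T}(L^{p_{1}})}$, yields $2^{qs}\|\Delta_{q}R(f,g)\|_{L^{\rho}_{T}(L^{p})}\leq Cc_{q}\|f\|_{L^{\rho_{1}}_{T}(L^{p_{1}})}\|g\|_{\widetilde{L}^{\rho_{2}}_{T}(B^{s}_{p_{2},r})}$ with $\|(c_{q})\|_{\ell^{r}}\leq1$; this is the only place $s>0$ is used. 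Adding the three contributions and taking $\ell^{r}(q)$ proves the main estimate.

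For the stated corollary I would take $p_{1}=p_{3}=\infty$ (hence $p_{2}=p_{4}=p$), and choose the time exponents $\rho_{3}=\rho_{2}$, $\rho_{4}=\rho_{1}$; the main estimate then reads $\|fg\|_{\widetilde{L}^{\rho}_{T}(B^{s}_{p,r})}\leq C(\|f\|_{L^{\rho_{1}}_{T}(L^{\infty})}\|g\|_{\widetilde{L}^{\rho_{2}}_{T}(B^{s}_{p,r})}+\|g\|_{L^{\rho_{2}}_{T}(L^{\infty})}\|f\|_{\widetilde{L}^{\rho_{1}}_{T}(B^{s}_{p,r})})$. Since $s\geq N/p$, Bernstein's inequality and the triangle inequality in $L^{\rho_{i}}_{T}$ give $\|h\|_{L^{\rho_{i}}_{T}(L^{\infty})}\leq C\sum_{q}2^{qN/p}\|\Delta_{q}h\|_{L^{\rho_{i}}_{T}(L^{p})}=C\|h\|_{\widetilde{L}^{\rho_{i}}_{T}(B^{N/p}_{p,1})}\leq C\|h\|_{\widetilde{L}^{\rho_{i}}_{T}(B^{s}_{p,r})}$ by the embeddings of Lemma \ref{lem2.2}, which converts both terms to the desired form. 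The expected main obstacle is purely bookkeeping in the remainder term --- fixing the spectral-support constant $N_{0}$, checking the summability of $2^{(q-q')s}$, and arranging the H\"older splitting in time to be compatible with it --- rather than any new analytic difficulty, the whole argument being a faithful transcription of the proof of Proposition \ref{prop4.1}.
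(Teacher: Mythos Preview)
Your proposal is correct and is precisely the approach the paper takes: the paper does not give a detailed proof of Proposition \ref{prop4.2} at all, merely stating that ``the indices $s,p,r$ behave just as the stationary case whereas the time exponent $\rho$ behaves according to H\"older's inequality,'' i.e., one repeats the proof of Proposition \ref{prop4.1} and inserts H\"older in $t$ at each step. Your write-up in fact supplies more detail than the paper --- in particular your direct treatment of the remainder via the convolution/Young argument, rather than quoting the stationary Proposition \ref{prop2.2}, is a clean way to make the time-H\"older splitting transparent.
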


\section*{Acknowledgement}
The research of Jiang Xu is partially supported by the NSFC
(11001127), China Postdoctoral Science Foundation (20110490134) and
NUAA Research Funding (NS2010204).

\end{document}